\providecommand\@dotsep{5}
\def\listtodoname{List of Todos}
\def\listoftodos{\@starttoc{tdo}\listtodoname}
\theoremstyle{plain}
\newtheorem{theorem}{Theorem}%[section]
\newtheorem*{theorem*}{Theorem}
\newtheorem*{HTT}{Homotopy Transfer Theorem}
\newtheorem*{mainthm*}{Main Theorem}
\newtheorem{proposition}[theorem]{Proposition}
\newtheorem{lemma}[theorem]{Lemma}
\newtheorem*{lemma*}{Lemma}
\newtheorem{corollary}[theorem]{Corollary}
\newtheorem*{cor*}{Corollary}
\theoremstyle{definition}
\newtheorem{definition}[theorem]{Definition}
\newtheorem*{ass*}{Assumption}
\newtheorem*{conventions*}{Conventions}
\newtheorem*{acknowledgments*}{Acknowledgment}
\newtheorem*{notation*}{Notation}
\newtheorem{remark}[theorem]{Remark}
\newcounter{Q_num}
\theoremstyle{plain}
\newtheorem{question}[Q_num]{Question}
\newtheorem*{question*}{Question}
\def\Trans#1#2#3{{\sf Tr}_{#1,#2,#3}}
\def\setinf{{\mathcal A}_\infty}
\def\+{\!+\!}
\newcommand{\Pinf}{\cP_\infty}
\def\bfalpha{{\boldsymbol \alpha}}
\def\bfbeta{{\boldsymbol \beta}}
\def\pa{\partial}
\def\bfF{{\boldsymbol F}}
\def\bfh{{\boldsymbol h}}
\def\bfg{{\boldsymbol g}}
\def\qi{{quasi-isomorphism}}
\def\bfpsi{{\boldsymbol \psi}}
\def\bfomega{{\boldsymbol \omega}}
\def\bfphi{{\boldsymbol \phi}}
\def\bfT{{\boldsymbol \theta}}
\def\bff{{\boldsymbol f}}
\def\bfnu{{\boldsymbol \nu}}
\def\bfmu{{\boldsymbol \mu}}
\def\Trp{{\tt P}}
\def\otexp#1#2{#1^{\otimes #2}} 
\def\ot{\otimes}
\def\p{\bm{{p}}}
\def\krouzekstopka{
\unitlength .9mm
\linethickness{0.4pt}
\ifx\plotpoint\undefined\newsavebox{\plotpoint}\fi % GNUPLOT compatibility
\begin{picture}(0,3)
\put(0,4.23){\line(0,-1){2}}
\put(0,0){\line(0,-1){2}}
\put(0,1){\makebox(0,0)[cc]{\Large $\circ$}}
\end{picture}
}
\newcommand{\Z}{\mathbb{Z}}
\newcommand{\kk}{\Bbbk}
\newcommand{\eR}{R}
\newcommand{\xto}[1]{\xrightarrow{#1}}
\newcommand{\el}{\ell}
\newcommand{\tha}{\theta}
\newcommand{\Tha}{\Theta}
\newcommand{\vphi}{\varphi}
\def\martin#1\endmartin{\noindent{{\bf Martin:}\ {\color{rgb:-green!4!yellow,30;green!40!yellow,2;red,62} #1} 
    \hfill\rule{10mm}{.75mm}} \break} 
\newcommand{\ti}[1]{\tilde{#1}}
\newcommand{\ba}[1]{\bar{#1}}
\newcommand{\sse}{\subseteq}
\newcommand{\cP}{\mathcal{P}}
\newcommand{\antish}{\text{\rotatebox[origin=c]{180}{$!$}}}
\newcommand{\coP}{\mathcal{P}^\antish}
\newcommand{\maps}{\colon}
\newcommand{\tensor}{\otimes}
\newcommand{\gen}[1]{\bigl \langle #1 \bigr\rangle}
\newcommand{\T}{\bar{T}}
\newcommand{\Tc}{\bar{T}^c}
\newcommand{\rDelta}{\bar{\Delta}}
\newcommand{\rdDelta}[1]{\bar{\Delta}_{(#1)}}
\newcommand{\del}{\partial}
\newcommand{\bs}{\mathbf{s}}
\newcommand{\ua}{\bs}
\newcommand{\da}{\bs^{-1}}
\newcommand{\cF}{\mathcal{F}}
\newcommand{\cC}{\mathcal{C}}
\newcommand{\dlt}{\delta}
\newcommand{\Om}{\Omega}
\newcommand{\Del}{\Delta}
\newcommand{\cc}{\circ}
\newcommand{\wti}[1]{\widetilde{#1}}
\newcommand{\simten}[2]{{#1}_1 \tensor {#1}_2 \tensor \cdots \tensor {#1}_{#2}}
\newcommand{\plim}{\varprojlim}
\newcommand{\Find}[2]{{#1_{(#2)}}^{\!  1}}
\newcommand{\J}{\mathfrak{I}}
\newcommand{\eps}{\varepsilon}
\newcommand{\jm}{\jmath}
\DeclareMathOperator{\Hom}{\mathrm{Hom}}
\DeclareMathOperator{\id}{\mathrm{id}}
\DeclareMathOperator{\dR}{\mathrm{dR}}
\DeclareMathOperator{\cQ}{\mathcal{Q}}
\DeclareMathOperator{\chark}{\mathrm{char}}
\newcommand{\cM}{\mathscr{H}}
\newcommand{\cR}{\mathcal{R}}
\newcommand{\btensor}{\bm{\otimes}}
\newcommand{\tlab}[1]{{\scriptsize #1}}
\newcommand*{\emptycomment}[1]{}
\begin{document}
\title{Which homotopy algebras come from transfer?}

\author{Martin Markl}
\address{The Czech Academy of Sciences, Institute of Mathematics, {\v Z}itn{\'a} 25,
         115 67 Prague, The Czech Republic}
\email{markl@math.cas.cz}

\author{Christopher L.\  Rogers} 
\address{
 Department of Mathematics and Statistics, University of Nevada,
 Reno.\newline  1664 N. Virginia Street Reno, NV 89557-0084 USA} 
\email{chrisrogers@unr.edu, chris.rogers.math@gmail.com}

\thanks{This material is based upon work supported by the National
Science Foundation under Grant No.~DMS-1440140 while the authors were
in residence at the Mathematical Sciences Research Institute in
Berkeley, California, during the Spring 2020 semester. The first author was also supported by 
grant GA \v CR 18-07776S, Praemium Academi\ae\ and RVO: 67985840. The second author was also supported by a grant from the Simons Foundation/SFARI (585631,CR)}

\subjclass[2000]{13D99, 55S20}
\keywords{$A_\infty$-algebra, transfer, quasi-isomorphism, weak equivalence}

\begin{abstract}
  We characterize $A_\infty$-structures that are equivalent to a given
  transferred structure over a chain homotopy equivalence or a
  quasi-isomor\-phism, answering a question posed by D.\ Sullivan. Along
  the way, we present an obstruction theory for weak
  \hbox{$A_\infty$-morphisms} over an arbitrary commutative ring.  We
  then generalize our results to $\cP_\infty$-structures over a field
  of characteristic zero, for any quadratic Koszul operad $\cP$.
\end{abstract}

\maketitle

%\linenumbers

% \setcounter{tocdepth}{1}
% \tableofcontents

\section{Introduction}
\label{sec:intro}
An $A_\infty$-% (aka strongly homotopy associative) 
algebra is 
a~homotopical generalization of a differential graded associative
algebra \cite{stasheff:TAMS63}. 
It is a chain complex $(A,d)$ equipped with a binary operation $\mu_2$ for which
the associative law only holds up to a specified chain homotopy $\mu_3$. This
homotopy is taken to be part of the structure; it too must satisfy a
law, but only up to another specified homotopy $\mu_4$, which satisfies yet
another law and so forth. See \cite[Sec.\ 2]{Markl:2006}
for the precise definition and terminology. This seemingly complicated
generalization is in fact quite natural, and it endows 
$A_\infty$-algebras with many desirable homological properties.

For example, given a dg associative algebra $A$ and a 
chain homotopy equivalent complex $A'$, there is in general no
dg associative  algebra structure on $A'$ such that the given chain homotopy
equivalence becomes a morphism of dg associative algebras. On
the other hand, for \hbox{$A_\infty$-algebras} one has:

\begin{HTT}[{\cite[Theorem~10.3.1]{LV}}]
Let the chain complex $(A',d')$ be a homotopy retract of $(A,d)$,
i.e.\ there exists a diagram
\begin{align} 
\label{eq:htt}
&\xymatrix@1{     *{ \quad \ \  \quad (A, d)\ } 
\ar@(dl,ul)[]^{h}\ \ar@<0.5ex>[r]^(.6){f_1} & *{\
(A',d')}    \ar@<0.5ex>[l]^(.4){g_1}},\
g_1 f_1 - \id_{A}  =d  h+ h  d
\end{align}
in which $f_1$ and $g_1$ are chain maps, with $g_1$
inducing an isomorphism on homology, and $h$ is a chain homotopy
between $g_1f_1$ and the identity endomorphism of~$A$.\footnote{In
  other words, $g_1$ is a left homotopy inverse of $f_1$.} 
Then any $A_\infty$-algebra
structure on $(A, d)$ can be transferred to an  $A_\infty$-algebra
structure on $(A', d')$ such that $g_1$ extends to a weak $A_\infty$-morphism. 
\end{HTT}

The first author proved in \cite{Markl:2006} a much stronger result,
providing simple explicit formulas not only for the transferred
$A_\infty$-structure and an extension of $g_1$, but
also for extensions of $f_1$ and  $h$. Furthermore, 
$g_1$ is not required in \cite{Markl:2006} to induce an isomorphism on homology.  
The extensions of both $f_1$ and $g_1$ will play a crucial role in this work.
% Unfortunately,
% these formulas are available only for $A_\infty$-algebras not
% for other strongly homotopy algebras. 
% Outside the $A_\infty$-realm, an explicit extension of $f_1$ into a
% weak $A_\infty$-morphism was constructed only under the additional 
% assumption of the {\em side conditions\/}  
% \[
% h^2=0, \ f_1h = 0,\ hg_1 = 0,
% \]
% cf.~\cite[Theorem~5]{DSV:2016}. Since the extensions of both $f_1$ and
% $g_1$ will be crucial in some of our proofs, the main part of our
% paper will be focused on the \hbox{$A_\infty$-case}.

The transfers of $A_\infty$-structures over a
chain map admitting a left homotopy inverse, as given by the formulas
presented in \cite{kontsevich-soibelman:00,Markl:2006,merkulov:98} and recalled in Sec.\ \ref{sec:trans} below, % and recalled in Section~\ref{Miro ma narozeniny.}
% below
% \footnote{Cf.~also Remark~\ref{V Praze zakaz vychazeni.} for some
  % results predating  \cite{Markl:2006}.}
 % became increasingly popular and 
have found applications in many contexts. For example, they have been used in geometry~\cite{aka,bara,budur,civ,poli},
 homological algebra~\cite{amo,ballard} and mathematical \hbox{physics~\cite{hi,zero}}.
It is therefore natural to ask which $A_\infty$-structures are
equivalent to a given transferred one. This was the question posed to the first author by
Dennis Sullivan during his visit to the Simons Center in June 2019. 
The aim of this note is to give an answer for the case when the chain
map $f_1$ in \eqref{eq:htt}
over which the transfer is performed is a {\it chain homotopy equivalence}
as in~\eqref{eq:hmtpy-data}. That is, in addition to the hypothesis that $g_1f_1$ is chain homotopic to the identity $\mathrm{id}_A$, we also assume that $f_1g_1$  is chain homotopic to the identity~$\mathrm{id}_{A'}$.
% , i.e.\ when not only $g_1f_1$ is chain homotopic to the identity
% $\id_A$, but also 
% $f_1g_1$  is chain homotopic to the identity~$\id_{A'}$~as in~\eqref{eq:hmtpy-data}. 
If the ground ring is a field, then this is the same as being a quasi-isomorphism\footnote{I.e., a chain map
  inducing a   homology isomorphism.}.
 
% \noindent 
% {\bf Conventions.} \td{Edits made} 

\begin{conventions*}
All algebraic objects in Sections \ref{sec:quest} - \ref{sec:obst}
are defined over a fixed commutative unital ring $R$, except Sec.\ \ref{sec:hmpty-cases} where $R$ is a field. In Section \ref{sec:Pinf}, we restrict to the case $R=\kk$, where $\kk$ is a field of characteristic zero.
%  which is, in some specific situations,
% assumed to be a field.
All graded objects are $\Z$-graded and unbounded; we use homological conventions for all dg objects. Given graded $R$-modules $V$ and $W$, we denote by $\Hom_R(V,W)$ the graded $R$-module $\Hom_R(V,W)_n:= \prod_{k \in \Z} \Hom_{R\mathrm{Mod}}(V_k,W_{k+n})$, where $\Hom_{R\mathrm{Mod}}(-,-)$ denotes the internal hom in the category of $R$-modules. We denote by $\ua  V$ and $\da V$, the suspension and desuspension, respectively, of the graded module $V$. Concretely, $(\ua  V)_n:= V_{n-1}$ and
$(\da V)_n:=V_{n+1}$.
 
Conventions and notations for $A_\infty$-algebras and their weak and strict morphisms are taken from~\cite[Sec.\ 2]{Markl:2006}. In Sec.\ \ref{sec:Pinf}, which is separate from the rest of the paper, we assume some familiarity with Koszul operads and homotopy operadic algebras as in \cite[Ch.\ 10]{LV}. 
\end{conventions*}

% \noindent
% {\bf Acknowledgment.} 
\begin{acknowledgments*}
We express our gratitude to Jim Stasheff,
Dennis Sullivan and the referee for useful suggestions and comments that 
led to substantial improvement of our~paper.
\end{acknowledgments*}

\section{Summary of results} \label{sec:quest}

Suppose that $(A,d,\bfmu) = (A,d,\mu_2,\mu_3,\ldots)$
% \begin{equation}
% \label{Bezi ctvrty tyden v MSRI.}
% \end{equation} 
is an $A_\infty$-algebra, $(A',d')$ a chain complex, and $f_1 :  
(A,d) \to (A',d')$ a chain map which is
a chain homotopy equivalence.  
Then it is well known (see Sec.\ \ref{sec:trans})
that there exists a transferred $A_\infty$-structure
%\begin{subequations}
\begin{equation*}
%\label{trans}
(A',d',\bfnu) = (A',d',\nu_2,\nu_3,\ldots)
\end{equation*} 
on $(A',d')$ and a lift of $f_1$ to a weak $A_\infty$-morphism
$\bff = (f_1,f_2,f_3,\ldots) \maps (A,d,\bfmu) \to (A',d',\bfnu)$.
Now suppose  that 
\begin{equation*}
%\label{Bezi uz ctvrty tyden v MSRI.}
(A',d',\bfmu') = (A',d',\mu'_2,\mu'_3,\ldots)
\end{equation*} 
%\end{subequations}
is another $A_\infty$-structure on $(A',d')$. 
For the purposes of exposition, let us begin with an approximation to
Sullivan's question.
\begin{question}
\label{Q1}
In the situation above, is the $A_\infty$-structure 
$\bfmu' =\{\mu'_2,\mu'_3,\ldots \}$ on the complex $(A',d')$
equivalent to a transferred structure? 
\end{question}

% \begin{equation}
% \label{eq:Q1}
% \tag{Q1}
% \begin{minipage}[l]{0.85\linewidth}\relax
% \em
% In the situation above, is the $A_\infty$-structure 
% $\bfmu' =\{\mu'_2,\mu'_3,\ldots \}$ on the complex $(A',d')$ 
% a transferred structure? 
% \end{minipage}
% \end{equation}

We need to specify what precisely 
``is equivalent'' and the adjective ``transferred'' mean in the above sentence. Let
us start with the former one; transferred structures
will be treated in the next section.
First, by ``is equivalent'', we could mean that
there exists a weak $A_\infty$-morphism   
\begin{equation}
\label{eq:is}
\bfphi = (\phi_1,\phi_2,\phi_3,\ldots) \maps (A',d',\bfnu) \to (A',d',\bfmu')
\end{equation}
such that one of the following cases is satisfied:
\renewcommand{\arraystretch}{1.25}
\setlength{\tabcolsep}{5pt}
\begin{center}
\begin{tabular}{|c||c|c| c| } 
%\multirow{2}{*}{Cases} & Relation between 
 \hline
\multirow{2}{*}{{\bf Case}} &  Relationship between  & Criterion for chain map  & Criteria for  \\
& $(A',d',\bfnu)$ and $(A',d',\bfmu')$ & $\phi_1 \maps (A',d') \to (A',d')$ & higher maps $\phi_{k \geq 2}$   \\
\hline \hline \multicolumn{4}{ |c| }{{\it Strict Cases}} \\
 \hline  \bf 1 & equality & $\phi_1=\id_{A'}$ & $\phi_k=0$ $\forall k \geq 2$ \\ 
\hline \bf 2 & strictly isomorphic & $\phi_1$ is an  automorphism & $\phi_k=0$ $\forall k \geq 2$ \\  % $\phi_2=\phi_3=\cdots =0$ \\ 
\hline \hline \multicolumn{4}{ |c| }{{\it Weak Cases}} \\
\hline  \bf 3 & isotopic & $\phi_1=\id_{A'}$ & none \\ 
\hline \bf 4& weakly isomorphic & $\phi_1$ is an automorphism & none \\ 
\hline
\end{tabular}
\end{center}

Other relationships are possible. Recall that a weak
$A_\infty$-morphism such as $\bfphi$ in \eqref{eq:is} above is an {\bf
  $A_\infty$-quasi-isomorphism}, or {\bf $A_\infty$-quism} for short,
if $\phi_{1}$ is a quasi-isomorphism of chain complexes. We say
$(A',d', \bfnu)$ and $(A',d',\bfmu')$ are {\bf weakly equivalent} or,
depending on the context,
that they have the same {\bf homotopy type} if they are connected by a zig-zag of $A_\infty$-quisms. Then ``is'' in Question \ref{Q1} could also mean:
\begin{center}
\begin{tabular}{|c||c|c|} 
%\multirow{2}{*}{Cases} & Relation between 
 \hline \multicolumn{3}{ |c| }{{\it Homotopical Cases}} \\
 \hline 
\multirow{2}{*}{{\bf Case}} &  Relationship between  & Criteria for weak morphisms   \\
& $(A',d',\bfnu)$ and $(A',d',\bfmu')$ &  between $(A',d',\bfnu)$ and $(A',d',\bfmu')$  \\
 \hline \hline  \bf 5a  & \multirow{2}{*}{$A_\infty$-quasi-isomorphic} & $\exists $ $A_\infty$-quism $ \bfphi \maps (A',d',\bfnu) \to (A',d',\bfmu')$ \\
\cline{1-1} \cline{3-3} \bf 5b &  &  $\exists $ $A_\infty$-quism $ \bfpsi \maps (A',d',\bfmu') \to (A',d',\bfnu)$ \\
\hline \bf 6& weakly equivalent & $\exists$ $A_\infty$-quisms $(A',d',\bfnu) \leftarrow \bullet \to (A',d',\bfmu')$  \\
 \hline
\end{tabular}
\end{center}

\subsection{Main results}\label{sec:results} 
There are seven variations of Question \ref{Q1} to consider.
Cases 1 and 2 involve comparing the isomorphism class of $(A',d',\bfmu')$ 
to that of $(A',d',\bfnu)$ in the category of $A_\infty$-algebras and strict morphisms. 
As we show in Sec.\ \ref{sec:strict-iso}, these turn out to be the only cases in which explicit formulas for the transferred structure actually matter. 

Cases 3 and 4 involve comparing isomorphism classes in the category of
$A_\infty$-algebras and weak morphisms, while the remaining three
concern isomorphism classes in the corresponding ``homotopy
category''. The characterization via isotopy, Case 3, is perhaps the
most interesting. In Thm.\ \ref{thm:isotopy}, we exhibit a precise
relationship between the isotopy class of a transfer and the
homotopy type of $(A,d,\bfmu)$.
In particular, any
$A_\infty$-structure which is a target of an $A_\infty$-quism is
isotopic to a~transferred one (Cor.\ \ref{cor:isotopy}).  Our proofs
of these results are based on an obstruction theory for
$A_\infty$-morphisms, which we develop in Sec.\ \ref{sec:obst}.

When $R$ is a field, we prove in Sec.\ \ref{sec:hmpty-cases} that  
the three homotopical cases, 5a, 5b, and 6, are all equivalent to the existence of an $A_\infty$-quism between the original $A_\infty$-structure $(A,d,\bfmu)$ and $(A',d',\bfmu')$. 

Furthermore, as we show in Cor.\ \ref{cor:iso-char}, our results for Case 3 also provide a positive answer to Sullivan's original question, which we can 
now state precisely:

\begin{question}
\label{Q2}
Can one formulate, in terms of the initial data $\bfmu$ and $f_1$ as above, 
the necessary and sufficient conditions for the $A_\infty$-structure 
$\bfmu'$ to be isotopic to a~structure transferred over the chain
homotopy equivalence  $f_1$?
\end{question}

Finally, in Sec.\ \ref{sec:Pinf}, we generalize Thm.\ \ref{thm:isotopy}
to the transfer of $\Pinf$-structures over a~field $\kk$ with $\chark \kk =0$, for any quadratic Koszul operad $\cP$.

\section{A reminder on transfers} \label{sec:trans}
We recall some basic features of transferred $A_\infty$-structures.
The initial data 
are an $A_\infty$-algebra $(A,d,\bfmu)=(A,d,\mu_2,\mu_3,\ldots)$ 
%as in \eqref{Bezi ctvrty tyden v MSRI.}, '
a chain complex  $(A',d')$, and
a chain map $f_1 \maps  (A,d) \to (A',d')$. A {\bf transfer} of
$(A,d,\bfmu)$ {\bf over a chain map} $f_1$
is an $A_\infty$-structure $(A',d',\bfnu) =
(A',d',\nu_2,\nu_3,\ldots)$ on 
$A'$ and an extension
\begin{equation}
\label{eq:f}
\bff = (f_1,f_2,f_3,\ldots) : (A,d,\mu_2,\mu_3,\ldots) \longrightarrow
(A',d',\nu_2,\nu_3,\ldots) 
\end{equation}
of the chain map $f_1$ into a weak $A_\infty$-morphism.

% \begin{definition}
% \label{Zitra jdu na jazz.}
% We call $(A',d',\nu_2,\nu_3,\ldots)$ a  {\em
%   transfer\/} of $(A,d,\mu_2,\mu_3,\ldots)$ over $f_1$.
% \end{definition}
% \noindent 

There are two standard situations in which such transfers are known to exist: 
the ``homology setup'' and the ``homotopy setup''.
%\subsection {Homology setup} 
In the former scenario, the transferred structure $\bfnu$ and the extension $\bff$
are built inductively via homological obstruction theory, so that the end result is non-canonical.
A prototype of transfer theorems of this kind was established by
T.~Kadeishvili in his seminal paper~\cite{kadeishvili:RMS80}. A very
general formulation \cite[Theorem~2]{pet} together with a historical account can be found
in the recent paper of D.\ Petersen. In that work, $f_1$ is assumed to induce a  \qi\ of certain hom 
complexes.  % This is in particular satisfied when the base ring is a
% field and $f_1$ itself is a \qi.

\subsection{The homotopy setup} \label{sec:hmtpy-setup}
This is the formalism which we will use in the present work. It was thoroughly developed
in \cite{Markl:2006}, with special cases and partial results appearing earlier in
the work of M.\ Kontsevich and Y.\ Soibelman
\cite{kontsevich-soibelman:00}, and  S.\ Merkulov \cite{merkulov:98}. In this approach,
which is valid over an arbitrary commutative ring, a transfer exists provided that we have a left homotopy inverse $g_1$ to $f_1$, and a~chain homotopy 
$h \maps g_1 f_1 \simeq \id_{A}$, as in Eq.\ \ref{eq:htt}.

The homotopy setup, in fact, yields explicit formulas for the transfer and much more.
Fix a left homotopy inverse $g_1$ of $f_1$, and a chain homotopy $h$, as above. Then the formulas in \cite{Markl:2006} produce an {\em explicit\/} $A_\infty$-structure $(A',d',\bfnu) =
(A',d',\nu_2,\nu_3,\ldots)$ on $A'$, an {\em explicit\/} extension
$\bff \maps (A,d,\bfmu) \to (A',d',\bfnu)$
% \begin{subequations}
% \begin{equation}
% \label{Cirrus}
% \bff = (f_1,f_2,f_3,\ldots) : (A,d,\mu_2,\mu_3,\ldots) \longrightarrow
% (A',d',\nu_2,\nu_3,\ldots) 
% \end{equation}
of the chain map $f_1$, as well as an {\em explicit\/}  extension 
\begin{equation}
\label{eq:g}
\bfg = (g_1,g_2,g_3,\ldots) \maps (A',d',\bfnu) \longrightarrow
(A,d,\bfmu) 
\end{equation}
of the chain map $g_1$, and an {\em explicit\/} extension $\bfh = (h,h_2,h_3,\ldots)$
of the homotopy $h$.  The extension $\bfg$ plays a crucial role in Sec.\ \ref{sec:class}, but $\bfh$ will not be needed.

We recall the formulas for the transferred structure $(A',d',\bfnu)$.  
According to the Ansatz \cite[Eq.\ 1]{Markl:2006}, the structure
operations $\nu_n$ are of the form
\begin{equation}
\label{eq:nu-n}
\nu_n := f_1 \circ \p_n \circ \otexp {g_1} n,\  n \geq 2,
\end{equation} 
where the {\bf p-kernels\/} \cite[Section~4]{Markl:2006} $\p_n \maps {A}^{\otimes n} \to A$ 
are defined as follows. Let $\Trp_n$ denote the set of planar rooted trees whose
vertices all have at least two incoming edges, with internal edges decorated by the symbol\krouzekstopka, and which have $n$ leaves. Elements of $\Trp_n$ encode
maps and their compositions. For example, the tree
\vspace{-1ex}
\begin{equation} \label{eq:tree}
\begin{tikzpicture}
\tikzstyle{w}=[circle, draw, minimum size=4, inner sep=1]
\tikzstyle{b}=[circle, draw, fill, minimum size=4, inner sep=1]

%% main tree
\begin{scope}[yshift=0cm,scale=1.0, >=stealth']
%vertices
\node[b][label=above right:\tlab{$\mu_3$}](b1) at (0,0) {};
\node[w][label=below right:\tlab{$h$}](w1) at (-150:0.75cm) {};
\node[w][label=below left:\tlab{$h$}](w2) at (-30:0.75cm) {};
\node[b][label=above left:\tlab{$\mu_2$}](b2) at (-150:1.25cm) {};
\node[b][label=right:\tlab{$\mu_3$}](b3) at (-30:1.25cm) {};

\node(r) at (0,1){};
% edges:
\draw[->] (b1) edge (r);
\draw[->] (w1) edge (b1);
\draw[->] (w2) edge (b1);
\draw (b2) edge (w1);
\draw (b3) edge (w2);

%%subtree at b2
\begin{scope}[shift={(-150:1.25cm)}]
%root
\node(rr) at (0,0.05){};
%vertices
\node[w][label=right:\tlab{$h$}](w3) at (-90:0.5cm) {};
\node[b][label=below:\tlab{$\mu_2$}](b4) at (-90:0.85cm) {};
%edges
\draw[->] (w3) edge (rr);
\draw (b4) edge (w3);
%leaves
\end{scope}

%%leaves
\begin{scope}[shift={(-150:3.5cm)}]
\node (a) at (0,0){};
\node (b) at (1.25,0){};
\node (c) at (2.5,0){};
\node (d) at (3.2,0){};
\node (e) at (3.50,0){};
\node (f) at (4.25,0){};
\node (g) at (5.0,0){};

\end{scope}

\draw[->] (a) edge (b2);
\draw[->] (b) edge (b4);
\draw[->] (c) edge (b4);
\draw[->] (d) edge (b1);
\draw[->] (e) edge (b3);
\draw[->] (f) edge (b3);
\draw[->] (g) edge (b3);
\end{scope}
\end{tikzpicture}
\end{equation}
is an element of $\Trp_7$. We assign to every tree $T \in \Trp_n$ a map $F_T \maps \otexp {A}n
\to {A}$ such that each \krouzekstopka\ corresponds to the homotopy $h \maps A
\to A$, and each vertex with $k$ incoming edges corresponds to
the map $\mu_k \maps \otexp {A}k \to A$. For example, the tree $T$ in \eqref{eq:tree}
is assigned to the degree $5$ map $F_T = \mu_3(h \circ \mu_2(\id_{A} \ot \  
h \circ \mu_2) \ot \id_{A} \ot \ h \circ \mu_3) \maps \otexp {A}7 \to A$.
The p-kernels in \eqref{eq:nu-n} are then given by
\begin{equation*}
%\label{eq:p-ker2}
\p_n := \sum_{T \in \Trp_n} (-1)^{\vartheta (T)} \cdot F_T,
\mbox { $n \geq 2$.}
\end{equation*}
where the sign $(-1)^{\vartheta (T)}$ depends\footnote{We will not need the precise definition of $(-1)^{\vartheta (T)}$; see \cite[Prop.\ 6]{Markl:2006} for details.}  on the number of subtrees in $T$ of a certain type. Notice that, while $\nu_2 = f_1 \circ \mu_2 \circ (g_1 \ot g_1)$,  
the higher arity transfer operations $\nu_{n \geq 3}$ depend on the
homotopy $h$, as well as $f_1$ and $g_1$.

In the rest of the paper, $f_1$ will always be a chain homotopy
equivalence. We will call the explicit transfer given
by~\eqref{eq:nu-n} the {\bf transfer over a chain homotopy equivalence
  $f_1$,} in contrast to a less specific transfer over a chain map $f_1$ defined at the
beginning of this section. 

\section{Classifying transferred structures} \label{sec:class}
In this section, we present the main results previously summarized  in
Sec.\ \ref{sec:results}. In  \ref{sec:strict-iso}--\ref{sec:hmpty-cases}, 
we address the strict isomorphism, weak isomorphism, and homotopical variations of Question \ref{Q1}, giving us seven cases in total to consider. In Sec.\ \ref{sec:hmpty-cases}, we also address Question \ref{Q2}, the precise version of
D.\ Sullivan's original query. 

The starting point for all results in this section is an
$A_\infty$-algebra $(A,d,\bfmu)$, a~{\it chain homotopy equivalence}
$f_1 \maps (A,d) \to (A',d')$, and an $A_\infty$-algebra
$(A',d',\bfmu')$. The goal is to compare the latter $A_\infty$-algebra
to a transfer $(A',d',\bfnu)$ of the former over $f_1$ via the
``homotopy setup'' from Sec.\ \ref{sec:hmtpy-setup}.

\subsection{Strict isomorphism: Cases 1 and 2}  \label{sec:strict-iso}
These are the only variations of Question \ref{Q1} in which explicit formulas for the transfer matters.
We simply check whether the operations $\mu'_n$ of
$(A',d',\bfmu')$ are either: (1) equal to the operations $\nu_n$ defined via Eq.\ \ref{eq:nu-n}, or (2)
equal to a twist of these operations by the automorphism $\phi_1\maps (A',d') \xto{\cong} (A',d')$.

\begin{remark}
\label{sec:str-vs-quasi}
Characterizing transfers via strict isomorphism leads to an interesting side question which is also related to Thm.\ \ref{thm:isotopy} below. Suppose that we are given a weak $A_\infty$-morphism
$\bfF \maps (A,d,\bfmu) \to  (A',d',\bfmu')$ which extends a~{\it
  quasi-isomorphism} of chain complexes $f_1$. That is,
$(A',d',\bfmu')$ is a transfer of $(A,d,\bfmu)$ over a chain map
$f_1$. Can one enhance $f_1$ into a homotopy data such that $ (A',d',\bfmu')$
equals, or is strictly isomorphic to, the transfer \eqref{eq:nu-n} of $ (A,d,\bfmu)$
over a chain homotopy equivalence $f_1$?  

The answer is \underline{no} in general, as the following example shows.
Let  $(A,d,\bfmu)$ be the free associative $\eR$-algebra $ \eR\gen{x}$ generated by
an element~$x$ of degree~$0$.  Interpret  $ \eR\gen{x}$  as an $A_\infty$-algebra with
the trivial differential and all structure operations except $\mu_2$
trivial. Let $(A',d',\bfmu')$ be the free associative $\eR$-algebra
$\eR\gen{x,u,\overline u}$ generated by $x$ of degree $0$, $u$ of
degree $2$ and $\overline u$ of degree $1$, with the differential
given by $d' x = d' \overline u := 0$, and $d' u := \overline u$.
Finally, let $\bfF \maps \bigl(\eR\gen{x},d = 0 \bigr) \to \bigl(\eR\gen{x,\overline u,u},d'\bigr)$ 
be the dg algebra morphism $\bfF(x):=x$,
viewed as a strict $A_\infty$-morphism $\bfF = (f_1,0,0,\ldots)$
with $f_1 := \bfF$. 

Consider the possible left homotopy inverses $g_1$ of $f_1$. Since the
differential of $\eR\gen{x}$ is trivial, $g_1$ must be a strict
inverse, and we easily see that the only possibility is that
$g_1(x^k) := x^k$ for $k \geq 0$, while $g_1$ is trivial on the
remaining elements of $\eR\gen{x,\overline u,u}$. Moreover, the
homotopy $h$ witnessing $g_1\cc f_1 \simeq \id$ must be zero
for degree reasons. Hence, the formulas \eqref{eq:nu-n} for the transferred 
$A_\infty$-structure give us 
\[
\nu_2(a,b) :=
\begin{cases}
\mu'_2(a,b) & \hbox{if $a = x^k$, $b =x^l$ for some $k,l \geq 0$, and}
\\
0&\hbox{otherwise,}
\end{cases} 
\]
while $\nu_n := 0$ for $n \geq 3$. It is easy to check that this
transferred structure is neither equal to nor strictly isomorphic to $\eR\gen{x,\overline u,u}$.
\end{remark}

\subsection{Weak isomorphism: Cases 3 and 4} \label{sec:isotopy}
This is the most interesting variation of Question \ref{Q1}. The main technical tool used here is the obstruction theory developed in the next section (Sec.\ \ref{sec:obst}). We start with 
Case 3, which concerns the isotopy class of a transferred structure $(A',d',\bfnu)$.

\begin{theorem}
\label{thm:isotopy}
Given a chain homotopy equivalence $f_1 : (A,d) \to (A',d')$, an
$A_\infty$-structure $(A',d',\bfmu')$ on $(A',d')$ is a transfer over
the chain map $f_1$ if and only if it is isotopic to the transfer over the
chain homotopy equivalence $f_1$. 
\end{theorem}

\begin{proof}
Assume that $(A',d')$ is a transfer over the chain map $f_1$, i.e.\
that $f_1$ extends to a  weak $A_\infty$-morphism
$
\bfF =  (f_1,F_2,F_3,\ldots)\maps (A,d,\bfmu) \to (A',d',\bfmu')
$
and promote the chain homotopy equivalence $f_1$ to the data
\begin{equation}
\label{eq:hmtpy-data}
\xymatrix@1{     *{ \quad \ \  \quad (A, d)\ } \ar@(dl,ul)[]^{h}\ 
\ar@<0.5ex>[r]^{f_1} & *{\
(A',d') \quad \ \  \ \quad } \ar@(ur,dr)[]^{l}
\ar@<0.5ex>[l]^{g_1}
}\hskip -1em   ~  g_1 f_1 - \id_{A}  =d  h+ h  d, \
f_1 g_1 - \id_{A'}  =d'  l + l  d'.
\end{equation}
Let $(A',d',\bfnu)$ be the structure transferred over $f_1$ using
$f_1,g_1$ and $h$. Then $f_1$ can be extended to a weak $A_\infty$-morphism
$\bff \maps (A,d,\bfmu) \to (A',d',\bfnu)$ as in \eqref{eq:f} 
and $g_1$ can also be extended to $\bfg \maps  (A',d',\bfnu) \to (A,d,\bfmu)$
as in \eqref{eq:g}. The linear term $(\bfF\! \circ\! \bfg)_1$ of the composition
$\bfF \cc \bfg \maps (A',d',\bfnu) \to (A,d,\bfmu')$
equals $f_1 \circ g_1$, which is homotopic to the identity
$\id_{A'}$ via the homotopy $l$ in \eqref{eq:hmtpy-data}.
It then follows from Prop.\ \ref{prop:lift} in Sec.\ \ref{sec:obst} 
% , with  \hbox{$\bfF\! \circ
% \!\bfg$} in place of $\bftheta$ and $\id_{A'}$ in place of $\psi_1$,
that there exists a weak $A_\infty$-morphism of the form 
\[
\bfphi:= (\id_{A'},\phi_2,\phi_3,\ldots) : (A',d',\bfnu) \to
(A',d',\bfmu').
\]
Hence, $\bfphi$ is our desired isotopy.%  between $(A',d',\bfmu')$ and the transferred
% structure $(A',d',\bfnu)$.

The opposite implication is simple. If $\bfphi \maps  (A',d',\bfnu) \to
(A',d',\bfmu')$ is an isotopy, then $\bfF := \bfphi \circ \bff$ is
a weak $A_\infty$-morphism extending $f_1$.
\end{proof}

\begin{corollary}\label{cor:isotopy}
The isotopy type of the transfer over a given chain homotopy
equivalence  $f_1$ does not depend on
the choices of $g_1$ and $h$ in \eqref{eq:htt}. 
If~$R$ is a field, then any $A_\infty$-structure which is a target
of an $A_\infty$-quism enhancing $f_1$ is isotopic to a structure transferred over 
a~chain homotopy equivalence enhancing~$f_1$.
\end{corollary}

In light of Thm.\ \ref{thm:isotopy}, we invite the reader to verify
that the two $A_\infty$-structures on the complex
$\eR\gen{x,\overline u,u}$ considered in Remark~\ref{sec:str-vs-quasi}
are indeed isotopic.
On the other hand, if $f_1 \maps 0 \to (A',d')$ is the trivial chain
map, then the class of transfers over $f_1$ consists of all
$A_\infty$-structures on $(A',d')$. 
Hence, 
the class of transfers over a chain map will not equal the isotopy class of a given transfer, in general.

The analogous result for Case 5, which involves weak isomorphism classes~is:   
\begin{theorem}\label{thm:weak-iso}
Given a chain homotopy equivalence $f_1 : (A,d) \to (A',d')$, an
$A_\infty$-structure $(A',d',\bfmu')$  is  a transfer of
$(A,d,\bfmu)$ over the chain map $\phi_1 \cc f_1$ for some 
automorphism $\phi_1 \maps (A',d') {{\xto{\cong}}}(A',d')$
if and only if it is weakly isomorphic
to the transfer of  $(A,d,\bfmu)$ over the chain homotopy
equivalence $f_1$.
\end{theorem}
The proof is a simple modification of the one given for  
Thm.\ \ref{thm:isotopy}, so we omit it.

\subsection{The homotopical Cases 5 and 6} \label{sec:hmpty-cases} 
To give sensible answers for these cases, we assume that $R$ is a field, so that $f_1$ is a chain homotopy equivalence if and only if it is a \qi\ of complexes.
It then turns out that Cases  (5a), (5b), and (6) are equivalent. As before, let $(A',d',\bfnu)$ denote a
transfer of $(A,d,\bfmu)$ over the quasi-isomorphism of complexes $f_1$,
%  \maps (A,d) \to
% (A',d')$, 
and let  $(A',d',\bfmu')$ be an arbitrary $A_\infty$-structure on $(A',d')$. Below, the relation ``$ \simeq $'' denotes weak equivalence of $A_\infty$-algebras.
\begin{proposition}
\label{prop:weq}
The following six conditions are equivalent:\\
\setlength{\tabcolsep}{1.75pt}
\begin{tabular}{l l} 
(i) $\exists\, A_\infty$-quism $(A',d',\bfnu) \to (A',d',\bfmu')$,  & (iv) 
$\exists\, A_\infty$-quism $(A',d',\bfmu') \to  (A,d,\bfmu)$,\\
(ii) $\exists\, A_\infty$-quism  $(A',d',\bfmu') \to (A',d',\bfnu)$, & 
(v) $\exists\, A_\infty$-quism  $(A,d,\bfmu) \to (A',d',\bfmu')$,\\
(iii) $(A',d',\bfnu) \simeq (A',d',\bfmu')$, & (vi) $(A,d,\bfmu) \simeq (A',d',\bfmu')$.\\
\end{tabular}
\end{proposition}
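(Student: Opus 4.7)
My plan is to establish all six equivalences by first building a bridge between the two clusters of conditions $\{(\mathrm{i}),(\mathrm{ii}),(\mathrm{iii})\}$ and $\{(\mathrm{iv}),(\mathrm{v}),(\mathrm{vi})\}$, and then closing the equivalences within each cluster using the standard fact that, over a field, every $A_\infty$-quasi-isomorphism admits a quasi-inverse.

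First I will use the transfer machinery recalled in Sec.\ \ref{sec:trans} to produce explicit $A_\infty$-morphisms $\bff \maps (A,d,\bfmu) \to (A',d',\bfnu)$ and $\bfg \maps (A',d',\bfnu) \to (A,d,\bfmu)$ extending $f_1$ and a chosen left homotopy inverse $g_1$, respectively. Since $R$ is a field, the assumption that $f_1$ is a chain homotopy equivalence is equivalent to $f_1$ (and hence $g_1$) being a quasi-isomorphism of complexes. Consequently, $\bff$ and $\bfg$ are both quasi-isomorphisms of $A_\infty$-algebras, and in particular $(A,d,\bfmu) \simeq (A',d',\bfnu)$.

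With the bridge in hand, pre- and post-composition with $\bff$ or $\bfg$ yields the cross-cluster equivalences. Given a quasi-iso $\bfphi \maps (A',d',\bfnu) \to (A',d',\bfmu')$ witnessing (i), the composite $\bfphi \circ \bff$ witnesses (v); conversely, given a quasi-iso $\bfpsi \maps (A,d,\bfmu) \to (A',d',\bfmu')$ witnessing (v), the composite $\bfpsi \circ \bfg$ witnesses (i). The same argument, reversing the direction, handles (ii) $\Leftrightarrow$ (iv), and (iii) $\Leftrightarrow$ (vi) follows by prepending a zig-zag with $\bff$ or $\bfg$.

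Finally, within each cluster, the implications (i), (ii) $\Rightarrow$ (iii) and (iv), (v) $\Rightarrow$ (vi) are trivial. For the converses, I will invoke the classical result (for example \cite[Theorem~10.4.4]{LV}) that over a field every $A_\infty$-quasi-isomorphism admits a quasi-inverse, so a zig-zag of quasi-isomorphisms may always be compressed into a single quasi-isomorphism pointing in either direction. This yields (iii) $\Rightarrow$ (i), (ii) and (vi) $\Rightarrow$ (iv), (v), completing the circle. The main obstacle is simply to cite the invertibility theorem at the appropriate level of generality (over an arbitrary field, not merely characteristic zero); no new constructive argument is needed beyond composing the transfer morphisms $\bff$, $\bfg$ with candidate quasi-isomorphisms and inverses.
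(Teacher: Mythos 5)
Your argument is correct and follows essentially the same route as the paper: both reduce all six equivalences to (a) the transfer supplying quasi-isomorphisms $\bff$ and $\bfg$ linking $(A,d,\bfmu)$ with $(A',d',\bfnu)$, and (b) the reversibility of $A_\infty$-quasi-isomorphisms over a field. The only cosmetic difference is that the paper isolates ingredient (b) as Lemma \ref{lem:quasi-inv} and proves it self-containedly (by transferring to minimal models and inverting the resulting weak isomorphism), whereas you cite the corresponding statement from Loday--Vallette; either suffices for $A_\infty$-algebras over an arbitrary field.
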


We need the following lemma, which is \cite[Theorem~10.4.4]{LV} with
${\cP}$ the operad for associative algebras; it also follows from
an abstract homotopy theoretic argument
\cite[Sec.\ 3.7]{Keller}.

\begin{lemma}
\label{lem:quasi-inv}
Let $(B,d,\bfomega)$ and
$(B',d',\bfomega')$ be  $A_\infty$-algebras. Then there exists an $A_\infty$-quism $\bfalpha : (B,d,\bfomega) \to  (B',d',\bfomega')$ if and only if 
there exists an $A_\infty$-quism $\bfbeta : (B',d',\bfomega') \to  (B,d,\bfomega)$ in the opposite
direction.
\end{lemma}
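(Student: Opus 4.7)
The plan is to reduce the statement to the invertibility of isotopies via Theorem \ref{thm:isotopy}. By symmetry (interchanging the roles of the two algebras), it suffices to show that the existence of a quasi-isomorphism $\bfalpha \maps (B,d,\bfomega) \to (B',d',\bfomega')$ forces the existence of one in the opposite direction. Since $R$ is a field (Sec.\ \ref{sec:hmpty-cases}), the linear part $\alpha_1$ is automatically a chain homotopy equivalence, so we may fix homotopy data $(g_1,h,l)$ as in \eqref{eq:hmtpy-data} with $f_1 := \alpha_1$; in particular $g_1$ is a quasi-isomorphism of chain complexes.

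Applying Theorem \ref{thm:isotopy} to $\bfalpha$, which extends the chain homotopy equivalence $\alpha_1$, yields a transferred $A_\infty$-structure $(B',d',\bfnu)$ of $(B,d,\bfomega)$ over $\alpha_1$ together with an isotopy $\bfphi \maps (B',d',\bfnu) \to (B',d',\bfomega')$ whose linear part equals $\id_{B'}$. At the same time, the explicit transfer formulas of Sec.\ \ref{sec:hmtpy-setup} supply a weak $A_\infty$-morphism $\bfg \maps (B',d',\bfnu) \to (B,d,\bfomega)$ extending $g_1$. The candidate quasi-isomorphism in the opposite direction is then the composite $\bfbeta := \bfg \cc \bfphi^{-1} \maps (B',d',\bfomega') \to (B,d,\bfomega)$, whose linear part is $g_1 \cc \id_{B'} = g_1$, a quasi-isomorphism by construction.

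The only step requiring additional justification is the existence of $\bfphi^{-1}$, which is a routine induction on arity: for a candidate inverse of the form $\bfphi^{-1} = (\id_{B'}, \psi_2, \psi_3, \ldots)$, the weak $A_\infty$-morphism equation at arity $n$ determines $\psi_n$ uniquely from $\psi_2, \ldots, \psi_{n-1}$ and the components of $\bfphi$, because $\phi_1 = \id_{B'}$ contributes linearly in the top-arity term. I do not foresee any genuine obstacle: the substantive content of the lemma has already been absorbed into Theorem \ref{thm:isotopy} and the transfer formulas, and the remainder is a formal consequence together with the standard fact that any weak $A_\infty$-morphism with invertible linear part is itself invertible.
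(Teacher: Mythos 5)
Your proof is correct, and it is not circular: Theorem \ref{thm:isotopy} is established independently of Lemma \ref{lem:quasi-inv} (via the obstruction theory of Sec.\ \ref{sec:obst}), so you may invoke it here. But your route is genuinely different from the paper's. The paper does not use Theorem \ref{thm:isotopy} at all; instead it chooses quasi-isomorphisms $p_1 \maps (B,d) \to (B_o,0)$ and $p'_1 \maps (B',d') \to (B'_o,0)$ onto complexes with zero differential (possible since $R$ is a field), transfers both structures to these ``minimal'' models, and observes that the induced morphism $\bfalpha_o := \bfp' \circ \bfalpha \circ \bfq$ between them has an \emph{isomorphism} as its linear part (a quasi-isomorphism between complexes with trivial differentials is an isomorphism); it then inverts $\bfalpha_o$ using the same fact you rely on --- that a weak $A_\infty$-morphism with invertible linear part is invertible \cite[Sec.\ 10.4.1]{LV} --- and conjugates the inverse back by the transfer morphisms. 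You instead upgrade $\alpha_1$ to a chain homotopy equivalence, apply Theorem \ref{thm:isotopy} to get an isotopy $\bfphi$ from a transfer $(B',d',\bfnu)$ to $(B',d',\bfomega')$, invert the isotopy, and compose with the transfer morphism $\bfg$. Both arguments ultimately reduce to inverting a weak isomorphism; the paper arranges for the linear part to be an isomorphism by passing to zero differentials, while you arrange for it to be the identity via the isotopy theorem. The trade-off is that your argument imports the full obstruction-theoretic machinery behind Proposition \ref{prop:lift}, whereas the paper's version needs only the explicit transfer formulas and is therefore self-contained at an earlier point in the logical development; on the other hand, your version avoids the auxiliary minimal models entirely and makes the dependence on Theorem \ref{thm:isotopy} explicit. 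One small point of care: you should say explicitly that $g_1$, being a homotopy inverse of the chain homotopy equivalence $\alpha_1$, is itself a quasi-isomorphism --- you do assert this, so the argument is complete.
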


% \begin{lemma}
% \label{Veci se vyvijeji velmi rychle.}
% For  $A_\infty$-algebras $(B,d,\bfomega)$ and
% $(B',d',\bfomega')$, the two conditions
% \begin{itemize}
% \item[(i)]
% there exists a quasi-isomorphism
% $\bfalpha : (B,d,\bfomega) \to  (B',d',\bfomega')$, and
% \item[(ii)]
% there exists a quasi-isomorphism  
% $\bfbeta : (B',d',\bfomega') \to  (B,d,\bfomega)$ in the opposite
% direction,
% \end{itemize}
% are equivalent. 
% \end{lemma}

\begin{proof}[Proof of Prop.\ \ref{prop:weq}]
Since $f_1$ is a quasi-isomorphism of chain complexes by assumption,
$(A,d,\bfmu)$ is weakly equivalent to its transfer
$(A',d',\bfnu)$. Weak equivalence is an equivalence relation,
therefore (iii) is equivalent to (vi). On the other hand, by definition, a weak equivalence is a zig-zag of $A_\infty$-quisms, and each arrow of this zig-zag can be inverted by Lemma \ref{lem:quasi-inv}. 
This makes the remaining equivalences clear.
\end{proof}

\subsection{Necessary and sufficient conditions for isotopy} \label{sec:Q2}
We now address Question \ref{Q2}, the precise version of
D.\ Sullivan's original query. It is motivated by an observation concerning the characterization of weak isomorphism classes, based
on a conjecture  communicated to the
authors by Sullivan. (Recall that isotopy is a special case of a weak isomorphism.)  A form of this conjecture is proven as  Thm.~\ref{thm:Sull}
below. Corollary \ref{cor:iso-char} is then our answer to Question \ref{Q2}
given in the language of obstruction theory.

Let $\setinf(A,d)$ denote the set of weak
isomorphism classes of $A_\infty$-structures on a given chain complex
$(A,d)$. Following \cite[Sec.\ 6]{Markl:2006}, since $f_1$ is assumed to be a chain homotopy equivalence, choosing homotopy data as in \eqref{eq:hmtpy-data} induces maps of sets
\begin{equation}
\label{eq:biject}
\Trans fgh \maps \setinf(A,d) \to \setinf(A',d'), \quad \Trans gfl \maps \setinf(A',d') \to \setinf(A,d).
\end{equation}
Proposition 10 in \cite{Markl:2006} implies that the functions $\Trans fgh$ and
$\Trans gfl$ are mutually inverse bijections. % Since we do not know whether
% an analog of the bijections in~(\ref{eq:biject}) holds for
% isotopy classes, the second part of Thm.~\ref{thm:Sull} which relies
% on it will be formulated for weak isomorphism classes.  
In the theorem below, if $(A,d)$ is a subcomplex of $(A',d')$, then an {\bf  extension\/} of an
$A_\infty$-structure  $\bfmu = (\mu_2,\mu_3,\ldots)$ on $(A,d)$ to $(A',d')$ is an $A_\infty$-structure $\bfnu =(\nu_2,\nu_3,\ldots)$ on $(A',d')$ such 
that the restriction $\nu_n|_{A^{\ot n}}$ equals $\mu_n$ for  $n \geq 2$.
Note that we formulate the second statement of the theorem in terms of weak isomorphism classes
since we do not know whether bijections analogous to those in~(\ref{eq:biject}) exist for
isotopy classes.
% an {\em  extension\/} of an
% $A_\infty$-structure  $\bfmu = (\mu_2,\mu_3,\ldots)$ from a subcomplex $(A,d)$
% of $(A',d')$  onto $(A',d')$ is an $A_\infty$-structure $\bfnu =
% (\nu_2,\nu_3,\ldots)$ on $(A',d')$ such 
% that the restriction $\nu_n|_{A^{\ot n}}$ equals $\mu_n$ for  $n \geq 2$.
% We do not know whether an analog of the bijections in~(\ref{eq:biject}) holds for
% isotopy classes. Hence, we formulated the second weak isomorphism classes.
% the second part of Thm.~\ref{thm:Sull} which relies
% on it will be 

\begin{theorem} \label{thm:Sull}
Suppose $R$ is a field, and that $(A,d)$ is a subcomplex of $ (A',d')$ such that 
the inclusion $\iota: 
(A,d) \hookrightarrow (A',d')$ is a quasi-isomorphism. Then
\begin{enumerate}
\item
The isotopy class of a transfer of
an $A_\infty$-structure $(A,d,\bfmu)$ over the chain map 
$\iota$ contains an extension of the family
$\bfmu = (\mu_2,\mu_3,\ldots)$ 
to $A'$.
\item
Moreover, the $A_\infty$-structure  $(A,d,\bfmu)$ is
characterized, up to weak isomorphism, by the weak
isomorphism class of its extension.   
\end{enumerate}
\end{theorem}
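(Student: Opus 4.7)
The plan is to split the theorem into its two assertions and attack them in turn. For the existence statement I will construct homotopy data of the form \eqref{eq:htt} in which the homotopy $h$ is zero and $g_1$ is a strict chain-level retraction of $\iota$; direct inspection of the $\p$-kernel formula \eqref{eq:nu-n} will then show that the resulting transfer $(A',d',\bfnu)$ restricts to $\bfmu$ on each $A^{\otimes n}$. For the uniqueness statement I will invoke the bijectivity of the transfer map \eqref{eq:biject} established in \cite[Prop.\ 10]{Markl:2006}.

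To build the retraction I would exploit the field hypothesis: pick a graded-vector-space splitting $A' = A \oplus C$ and write $d'|_C = \delta_A + \delta_C$ with $\delta_A \maps C \to A$ and $\delta_C \maps C \to C$. Since $\iota$ is a quasi-isomorphism, $(C,\delta_C) \cong (A'/A,\bar d)$ is acyclic, hence contractible over a field, so there is a degree $+1$ map $s \maps C \to C$ with $\delta_C s + s \delta_C = \id_C$. Set $g_1|_A := \id_A$ and $g_1|_C := -\delta_A \circ s$. The $A$-component of $(d')^2 = 0$ yields $d \delta_A = -\delta_A \delta_C$, and combined with the contraction equation a short calculation gives $d g_1 = g_1 d'$. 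Thus $g_1$ is a chain-level retraction, and the triple $(\iota, g_1, h = 0)$ trivially satisfies \eqref{eq:htt}.

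With $h = 0$ the transfer formula \eqref{eq:nu-n} collapses dramatically when restricted to $A^{\otimes n} \subset (A')^{\otimes n}$. Using $g_1|_A = \id_A$, the restriction of $\nu_n = \iota \circ \p_n \circ g_1^{\otimes n}$ becomes $\iota \circ \p_n|_{A^{\otimes n}}$. In the sum over $T \in \Trp_n$ defining $\p_n$, every tree with at least one internal edge applies the zero map $h$ to the output of some $\mu_k$ and therefore vanishes. Only the $n$-ary corolla survives, contributing $F_T = \mu_n$, so $\nu_n|_{A^{\otimes n}} = \mu_n$ after identifying $A$ with its image in $A'$. Consequently $\bfnu$ itself lies in the transfer's weak isomorphism class and is an extension of $\bfmu$.

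For the moreover part, the map $\Trans fgh \maps \setinf(A,d) \to \setinf(A',d')$ in \eqref{eq:biject} is, by construction, the assignment sending $[\bfmu]$ to the weak isomorphism class of its transfer $[\bfnu]$. By \cite[Prop.\ 10]{Markl:2006} it is a bijection, hence injective; combined with Cor.\ \ref{cor:isotopy} (which ensures the transfer class does not depend on the auxiliary choices of $g_1$ or $h$), this injectivity shows that the weak isomorphism class of the extension determines $[\bfmu] \in \setinf(A,d)$. The principal obstacle is the construction of the strict retraction $g_1$, which relies crucially on the field hypothesis via the contractibility of the acyclic cokernel; once $h = 0$ is achieved, the rest of the argument is a routine inspection of the tree sum together with the cited bijection.
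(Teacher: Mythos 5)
Your proposal is correct and follows essentially the same route as the paper's proof: choose a strict retraction $\pi$ of $\iota$ with $h=0$ so that the tree sum in \eqref{eq:nu-n} collapses to the corolla and the transfer becomes $\nu_n = \iota \circ \mu_n \circ \pi^{\otimes n}$, then deduce the uniqueness statement from the bijectivity of the maps \eqref{eq:biject} via \cite[Prop.\ 10]{Markl:2006}. The only difference is that you explicitly construct the chain-level retraction $g_1$ from a contracting homotopy of the acyclic cokernel, a step the paper simply asserts is possible over a field.
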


\begin{proof}
Since we are working over a field, we may promote the initial setup
into the data in \eqref{eq:hmtpy-data}, with $f_1 := \iota$, $g_1$
a strict left inverse $\pi$ of $\iota$, $h := 0$ and $l$ an arbitrary
chain homotopy between $\iota \pi$ and $\id_{A'}$. 
Formulas \eqref{eq:nu-n} then clearly determine the pieces $\nu_n$, $n \geq 2$, of the transferred
structure as the extensions $\nu_n: = \iota \circ \mu_n \circ \pi^{\ot n}$
of $\mu_n$. Part (1) then follows from Theorem~\ref{thm:isotopy}.

If the same weak isomorphism class of $A_\infty$-structures on
$(A',d')$ contains extensions of two $A_\infty$-structures on
$(A,d)$, then these structures must be weakly isomorphic since
the maps \eqref{eq:biject} of weak isomorphism classes are bijections. This proves
part~(2) of the theorem. 
\end{proof}

Returning to the general situation over an arbitrary commutative ring, Thm.\ \ref{thm:isotopy} combined
with Cor.\ \ref{cor:Ainf-obs} below provides a characterization of transfers up to isotopy.
% Theorem~\ref{Pojedeme na chalupu?} implies the existence of an
% obstruction theory detecting  the isotopy classes of transferred
% structures.

\begin{corollary}\label{cor:iso-char}
The obstruction to exhibit an isotopy between $(A',d',\bfmu')$ and the transfer
of  $(A,d,\bfmu)$ over a chain homotopy equivalence $f_1 \maps (A,d) \to (A',d')$
is an infinite sequence of homology classes determined by $\bfmu$, $f_1$, and $\bfmu'$:
\[
\Bigl\{[\kappa_n] \in H_{n-2} \bigl( \Hom_R(A^{\tensor n},A')\bigr) ~ \vert ~ n \geq 2 \Bigr\}
\]
where the differential on the complex $\Hom_R(A^{\tensor n},A')$ is the canonical one induced by $d$ and $d'$.
\end{corollary}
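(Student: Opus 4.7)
The plan is to chain together Theorem \ref{thm:isotopy} with the obstruction theory developed in Section \ref{sec:obst}. Theorem \ref{thm:isotopy} asserts that $(A',d',\bfmu')$ is isotopic to a transfer of $(A,d,\bfmu)$ over $f_1$ if and only if the chain map $f_1$ admits an extension to a weak $A_\infty$-morphism $\bfF = (f_1, F_2, F_3, \ldots) \maps (A,d,\bfmu) \to (A',d',\bfmu')$. So the task reduces to identifying the obstructions to constructing such an extension inductively in the arity $n$.

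The inductive strategy is the standard one. Suppose $f_1, F_2, \ldots, F_{n-1}$ have been chosen so that the defining relations of a weak $A_\infty$-morphism hold in arities $\leq n-1$. The relation in arity $n$ takes the form
\[
\partial(F_n) = \kappa_n,
\]
where $\partial$ is the canonical differential on $\Hom_R(A^{\otimes n}, A')$ induced by $d$ and $d'$, and $\kappa_n$ is an explicit element of $\Hom_R(A^{\otimes n}, A')_{n-2}$ depending only on $\bfmu$, $\bfmu'$, $f_1$, and the previously chosen components $F_2, \ldots, F_{n-1}$. The verification that $\kappa_n$ is a $\partial$-cycle, and that its cohomology class $[\kappa_n] \in H_{n-2}\bigl(\Hom_R(A^{\otimes n}, A')\bigr)$ depends only on the initial data $\bfmu$, $\bfmu'$, and $f_1$ (not on the intermediate choices), is precisely the content of the obstruction theory of Section \ref{sec:obst}, which I would invoke via Cor.\ \ref{cor:Ainf-obs}. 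By exactness, the component $F_n$ exists if and only if $[\kappa_n] = 0$.

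Assembling these two inputs yields the corollary: iterating the inductive construction, a full weak $A_\infty$-morphism $\bfF$ extending $f_1$ exists if and only if the entire sequence of classes $\{[\kappa_n]\}_{n \geq 2}$ vanishes, and by Theorem \ref{thm:isotopy} this is equivalent to the existence of an isotopy between $(A',d',\bfmu')$ and a transfer. The main technical hurdle, namely well-definedness of the $[\kappa_n]$ as invariants of $(\bfmu, f_1, \bfmu')$ and the fact that the relevant hom-complex differential is the naive one, is already dispatched by the results of Section \ref{sec:obst}; once Cor.\ \ref{cor:Ainf-obs} is in hand, the proof amounts to observing that the two obstruction problems (isotopy to a transfer versus extension of $f_1$) coincide through Theorem \ref{thm:isotopy}.
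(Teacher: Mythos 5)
Your proposal is correct and follows exactly the route the paper intends: the corollary is obtained by combining Theorem \ref{thm:isotopy} (isotopy to a transfer is equivalent to extending $f_1$ to a weak $A_\infty$-morphism) with the obstruction theory of Section \ref{sec:obst}, packaged as Cor.\ \ref{cor:Ainf-obs}. The inductive setup you describe, with $\kappa_n$ the cycle of Prop.\ \ref{prop:obs} and $F_n$ existing iff $[\kappa_n]=0$, is precisely the paper's argument.
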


\section{Obstruction theory for $A_\infty$-morphisms} \label{sec:obst}
We develop in this section the tools needed to prove Thm.\ \ref{thm:isotopy}, Thm.\ \ref{thm:weak-iso}, and Cor.\ \ref{cor:iso-char}. We begin by recalling some basic facts concerning dg coalgebras and $A_\infty$-algebras, following \cite[Sec.\ 2]{Kopriva} and \cite[Sec.\ 1.26]{LV}.
\subsection{Coalgebras and the bar construction} \label{sec:coalg} Let
$V$ be a graded $R$-module. We denote by
$\bigl(\T^c(V),\rDelta \bigr)$ the reduced cofree conilpotent
coassociative coalgebra generated by $V$. Recall that this is the
graded coalgebra with underlying $R$-module
$\bigoplus_{n \geq 1}V^{\tensor n}$ equipped with the comultiplication
$
\rDelta(\simten{v}{n}):= \sum_{i=1}^{n-1} (\simten{v}{i}) \bigotimes
(v_{i+1} \tensor \cdots \tensor v_{n}).
$ 
We denote by
$\rdDelta{n} \maps \Tc(V) \to \Tc(V)^{\tensor n+1}$ the $n$th reduced
diagonal: the $R$-linear map defined recursively as
$\rdDelta{0}\!:=\id$, $\rdDelta{1}\!:=\rDelta$, and
\hbox{$\rdDelta{n}\!  : =(\rDelta \!\tensor\! \id^{\tensor (n-1)})\! \circ\!
\rdDelta{n-1}$} for $n > 1$. By construction, for $k < n$ we have
\begin{equation} \label{eq:ker-diag}
v_1 \tensor v_2 \tensor \cdots \tensor v_k \in \ker \rdDelta{n-1} \quad \forall v_1,\ldots,v_k \in V.
\end{equation}

Given a linear map $F \maps \Tc(V) \to \Tc(W)$ and integers $m,n \geq 1$, we denote by $F^m_n \maps V^{\tensor n} \to W^{\tensor m}$ the restriction $F \vert_{V^{\tensor n}}$ composed with the projection $\Tc(W) \to W^{\tensor m}$. In addition, linear maps corresponding to elements of the graded $R$-module 
\begin{equation} \label{eq:homcmplx}
\Hom_R(\Tc(V),W) \cong \prod_{n \geq 1} \Hom_R(V^{\tensor n},W)
\end{equation}
will be denoted as $F^1=(F^1_1,F^1_2,\cdots)$. Recall that there is a one-to-one correspondence  \cite[Sec.\ 2.1]{Kopriva} between degree $-1$ linear maps $D^1 \in \Hom_R(\Tc(V),V)$ and degree $-1$ coderivations $D \maps \Tc(V) \to \Tc(V)$ % of the coalgebra $\bigl(\T^c(V),\rDelta \bigr)$
given explicitly by %$ D^{m > n}_n:=0$ and
\begin{equation} \label{eq:coder}
D^{m}_n:=\sum_{ \substack{i +j = m-1\\ i, j \geq 0}} \id^{\tensor i} \tensor D^1_{n- m + 1} \tensor \id^{\tensor j}
\end{equation} 
for each $n\geq 1$. Note that $D^{m}_n=0$ if $m > n$.  A {\bf codifferential} on $\Tc(V)$
is a~degree~$-1$ coderivation $D$ as above satisfying $D\circ D=0$, or equivalently, 
for all $n \geq 1$:
\begin{equation} \label{eq:codiff}
\sum_{k =1}^n D^1_k \circ D^k_n =0.
\end{equation} 

Analogously, there is a one-to-one correspondence \cite[Sec.\ 2.2]{Kopriva} between degree~$0$ linear maps $F^1 \in \Hom_R(\Tc(V),V')$ and coalgebra morphisms $F \maps \Tc(V) \to \Tc(V')$, given explicitly by the formulas %$F^{m > n}_n:=0,$ and
\begin{equation} \label{eq:comorph}
F^{m}_n:= \sum_{i_1 + i_2 + \cdots + i_m =n} F^1_{i_1} \tensor F^{1}_{i_2} \tensor \cdots \tensor F^1_{i_m},  
\end{equation} 
for each $n\geq 1$. In particular, $F^m_n=0$ if $m > n$. If $D$ and $D'$ are codifferentials on $\Tc(V)$ and $\Tc(V')$, respectively, then a coalgebra morphism $F \maps \Tc(V) \to \Tc(V')$ satisfies $D' \cc F = F \cc D$ if and only if
for all $n \geq 1$:
\begin{equation} \label{eq:dgcomorph}
\sum_{k=1}^n {D^{'}}^{1}_k \cc F^k_n = \sum_{k=1}^n F^1_k \cc D^k_n.
\end{equation}
In this case, $F \maps (\Tc(V),D) \to (\Tc(V'),D')$ is a {\bf morphism} of dg-coalgebras.

% Furthermore, $\T^{c}(V)$ can be promoted to a graded bialgebra when equipped with the {\bf shuffle product} \cite[Prop.\ 1.3.2]{LV}:  
% \begin{equation} \label{eq:shuff}
% \begin{split}
% (\simten{v}{p}) \ast_{\ssh} &(v_{p+1}\tensor v_{p+2} \tensor \cdots \tensor v_{p+q}):=\\ 
% &\sum_{\si \in \Sh(p,q)} (-1)^\epsilon \, v_{\si^{-1}(1)} \tensor v_{\si^{-1}(2)} \tensor \cdots \tensor v_{\si^{-1}(p+q)}.
% \end{split}
% \end{equation}
% Above, $\Sh(p,q) \sse S_{p+q}$ is the subset of permutations on $p+q$ letters consisting of shuffles, and $(-1)^\epsilon$ denotes the usual Koszul sign \cite[Sec.\ 1.5.3]{LV}. 
%associated to a permutation in the graded context.
% permuting the homogeneous elements $a_1,a_2,\ldots,a_{p+q} \in A$ by $\sigma^{-1} \in S_{p+q}$.    

\subsubsection{The bar construction} \label{sec:bar}
Lastly, we recall the functorial assignment of an $A_\infty$-algebra $(A,d,\bfmu)$ 
to the coalgebra $C(A):=\Tc(\ua  A)$ equipped with the codifferential $\dlt \maps C(A) \to C(A)$ 
defined as $\dlt^1_1:= \ua  \cc d \cc \da$, and $\dlt^1_{n}:= \ua  \cc \mu_n \cc (\da)^{\tensor n}$, for $n \geq 2$. The assignment is fully faithful: there is a one-to-one correspondence \cite[Sec.\ 2.3]{Kopriva} between weak $A_\infty$-morphisms $\bff \maps 
(A,d,\bfmu) \to (A',d',\bfmu')$ and dg coalgebra morphisms $F \maps (C(A),\dlt) \to (C(A'),\dlt')$
given by the formulas $F^1_n:= \ua  \cc f_n \cc (\da)^{\tensor n}$, for all $n \geq 1$.
In what follows, $C^{n}(A)$ and $C^{\leq n}(A)$ denote the graded $R$-modules $(\ua  A)^{\tensor n}$ and $\bigoplus_{k \geq 1}^n (\ua  A)^{\tensor k}$, respectively.

\subsection{Operations on the Hom complex}
Let $(A,d,\bfmu)$ and $(A',d',\bfmu')$ be $A_\infty$-algebras; let $\bigl(C(A),\dlt \bigr)$ and
$\bigl(C(A'),\dlt' \bigr)$ denote their corresponding dg coalgebras. Consider the graded $R$-module $\cM:=\Hom_{R}\bigl(C(A),\ua  A' \bigr)$, as defined in \eqref{eq:homcmplx},
equipped with the differential
\begin{equation*} %\label{eq:homcmpx}
\del F^1:= {\dlt^{'}}^{1}_1 \cc F^1 - (-1)^{m} F^1 \cc \dlt
\end{equation*}
where $F^1 \maps C(A) \to \ua  A'$ is a degree $m$ $R$-linear map. Observe that $(\cM,\del)$ admits a~descending filtration of dg submodules $\cM=\cF_1 \cM \supseteq \cF_2 \cM \supseteq \cdots$
\[
\cF_r\cM:= \bigl \{ F^1 \in \Hom_R(C(A),\ua  A') ~ \vert ~  F^1 \vert_{C^{\leq r-1}(A)} =0 \bigr\}.
\]
Via the isomorphisms
\[
\cF_{r-1} \cM/\cF_{r} \cM \cong \Hom_R(C^{r-1}(A),\ua  A') \hbox{ and } 
\cM/\cF_{r} \cM \cong 
 \Hom_R(C^{\leq r-1}(A),\ua  A'),
\] it is easy to see that $(\cM,\del)$ is {\bf complete} with respect to the topology induced by above filtration, i.e.\ $\cM \cong \plim_r \cM/\cF_r \cM$. % In what follows, for a fixed integer $r > 1$, we will make use of the short exact sequence of dg modules 
% \begin{equation} \label{eq:ses}
% \cF_{r-1} \cM/\cF_{r} \cM \to \cM/\cF_{r}\cM \xto{\pi_{r}} \cM/\cF_{r-1}\cM,
% \end{equation}

\subsubsection{A codifferential on $\Tc(\cM)$} 
Given elements 
$\Find{F}{1}, \Find{F}{2},\ldots,\Find{F}{n} \in \cM$, 
let $\Find{F}{1} \btensor \Find{F}{2} \btensor \cdots \btensor \Find{F}{n} \in \cM^{\tensor n}$ denote the usual corresponding tensor\footnote{The notation $\Find{F}{i}$ % stands for an arbitrary element of $\cM$, i.e.\ an infinite sequence of linear maps $(\Find{F}{i}_1,\Find{F}{i}_2,\Find{F}{i}_3,\ldots)$. It   
should not be confused with $F^1_i$, i.e.\ the $i$th component of an element $F^1=(F^1_1,F^1_2,F^1_3,\ldots) \in \cM$.}. In particular, we denote by ${F^1}^{\, \btensor n}$  $n$-fold tensor product of $F^1 \in \cM$.

The next result concerns the properties of the linear maps $\cQ^1_n \maps \cM^{\tensor n} \to \cM$ defined as $\cQ^1_1(F):=\del F$, and for $n\geq 2$
\begin{equation} \label{eq:Q}
\cQ^1_{n}\bigl(\Find{F}{1} \btensor \Find{F}{2} \btensor\cdots \btensor\Find{F}{n} \bigr):=
{\dlt'}^1_n \cc \bigl(\Find{F}{1} \tensor \Find{F}{2} \tensor \cdots \tensor \Find{F}{n}) \cc \rdDelta{n-1}.
\end{equation}
Note that \eqref{eq:ker-diag} implies that the maps $\cQ^1_{n \geq 2}$ are compatible with the filtration on $\cM$, i.e.\
\begin{equation} \label{eq:Q-filt}
\cQ^1_n \bigl(\cF_{j_1}\cM,\cF_{j_2}\cM, \cdots, \cF_{j_n}\cM \bigr) \sse \cF_{j_{1} + j_{2} + \cdots + j_{n}} \cM. 
\end{equation}

A variation of following lemma was given in \cite[Sec.\ 4]{deKleijn} and \cite[Sec.\ 7.2]{deKleijn} for the case when $R$ is a field.
\begin{lemma}\label{lemma:Ainf-hom}
\mbox{} 
\begin{enumerate}[leftmargin=18pt]
\item The linear maps $\{\cQ^1_n\}_{n \geq 1}$ induce, via the formulas \eqref{eq:coder}, a degree $-1$ codifferential $\cQ$ on the coalgebra $\Tc(\cM)$. 

\item Given a degree 0 element $F^1 \in \cM$, the assignment 
\begin{equation} \label{eq:curv}
F^1 ~ \mapsto ~ \cR(F):= \sum_{n=1}^{\infty} \cQ^1_n( {F^1}^{\, \btensor n}) \in \cM_{-1}
\end{equation}
induces a well-defined set-theoretic function $\cR \maps\! \cM_0\! \to\! \cM_{-1}$.\ Moreover, \hbox{$\cR(F)=0$} if and only if $F^1$ corresponds, via the formulas \eqref{eq:comorph}, to a dg coalgebra morphism $F \maps (C(A),\dlt) \to (C(A'),\dlt')$

\item For all $F^1 \in \cM_0$, the following identity holds:
\begin{equation} \label{eq:bianchi}
%\cQ^1_1 \cR(F) + \sum_{n=2}^{\infty} \cQ^1_n \Bigl( {F^1}^{\, \btensor n-1} \ast_{\ssh} \cR(F) \Bigr) = 0
\cQ^1_1 \cR(F) + \sum_{n=2}^{\infty} \sum_{k=0}^{n-1}\cQ^1_n \Bigl( {F^1}^{\, \btensor (n-1)-k} \btensor \cR(F) 
\btensor {F^1}^{\, \btensor k} \Bigr) = 0.
\end{equation}
%where $\Tc(\cM) \tensor \Tc(\cM) \xto{\ast_{\ssh}} \Tc(\cM)$ is the shuffle product \eqref{eq:shuff}. 
\end{enumerate}
\end{lemma}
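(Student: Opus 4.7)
For part (1), the task is to verify the codifferential identity \eqref{eq:codiff}, namely $\sum_{k=1}^n \cQ^1_k \cc \cQ^k_n = 0$ on $\cM^{\tensor n}$, where $\cQ^k_n$ is built from $\{\cQ^1_\bullet\}$ via \eqref{eq:coder}. My approach would be to expand each composite $\cQ^1_k \cc \cQ^k_n$ using the definition \eqref{eq:Q} and the coassociativity identity $\rdDelta{m} = (\rDelta \tensor \id^{\tensor m-1}) \cc \rdDelta{m-1}$. The result breaks into contributions of two types: compositions of the form ${\delta'}^1_a \cc (\id^{\tensor i} \tensor {\delta'}^1_b \tensor \id^{\tensor j})$ acting on the $C(A')$ side, and factors of the form $F^{(i)} \cc \delta$ arising from $\delta$ acting on the $C(A)$ side. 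Invoking the codifferential relations \eqref{eq:codiff} for both $\delta'$ and $\delta$ then produces cancellation in pairs. Conceptually, this is the convolution $A_\infty$-algebra structure on $\cM$ (cf.~\cite[Sec.\ 4]{deKleijn}); the argument is purely algebraic and so goes through over an arbitrary commutative ring.

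For part (2), well-definedness of $\cR(F)$ is immediate from \eqref{eq:Q-filt}: the $n$-th summand lies in $\cF_n \cM$ and hence vanishes on $C^{\leq n-1}(A)$, so completeness of $\cM$ produces a well-defined element of $\cM_{-1}$. For the morphism characterization, I would restrict $\cR(F)$ to each $C^n(A)$ and expand the contributions: the $\cQ^1_1(F) = \del F^1$ piece gives ${\delta'}^1_1 \cc F^1_n - \sum_{k=1}^n F^1_k \cc \delta^k_n$, while $\cQ^1_k(F^{\btensor k})|_{C^n(A)}$ for $k \geq 2$ unfolds via the iterated reduced diagonal to $\sum_{m_1 + \cdots + m_k = n} {\delta'}^1_k \cc (F^1_{m_1} \tensor \cdots \tensor F^1_{m_k})$. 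Setting the total sum to zero on every $C^n(A)$ reproduces exactly the dg coalgebra morphism condition \eqref{eq:dgcomorph}.

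Part (3) will follow formally from part (1). Substituting $F^{\btensor m}$ into $\sum_k \cQ^1_k \cc \cQ^k_m = 0$ and summing over $m \geq 1$, the $k = 1$ contributions aggregate to $\cQ^1_1(\cR(F))$, while the $k \geq 2$ contributions, once all positions of the inserted factor $\cQ^1_m(F^{\btensor m})$ within the outer $\cQ^1_k$ slot are summed, regroup into $\sum_{n \geq 2} \sum_{j=0}^{n-1} \cQ^1_n(F^{\btensor (n-1-j)} \btensor \cR(F) \btensor F^{\btensor j})$; the filtration argument of part (2) justifies the rearrangement. The main obstacle is part (1), where careful Koszul-sign bookkeeping is required to track a codifferential traversing tensor factors that are themselves maps acting on a coalgebra; once this is settled, parts (2) and (3) are essentially formal.
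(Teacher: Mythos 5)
Your proposal is correct and follows essentially the same route as the paper: part (1) by direct computation from the coLeibniz/coassociativity identities and the codifferential relations for $\dlt$ and $\dlt'$, part (2) via the filtration/completeness argument and the identity $F^m_n=(F^1)^{\tensor m}\cc\rdDelta{m-1}\vert_{C^n(A)}$ matching against \eqref{eq:dgcomorph}, and part (3) by evaluating $\sum_k \cQ^1_k\cc\cQ^k_N$ on ${F^1}^{\,\btensor N}$ and summing over $N$ using $\cQ\cc\cQ=0$. Your explicit remark that the filtration compatibility justifies the regrouping of the double sum in (3) is a point the paper leaves implicit.
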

\begin{proof}
(1) Note that $\cQ^1_1 \cc \cQ^1_1 =0$, since $\cQ^1_1=\del$ is a differential. Let $n > 1$. Since $\dlt'$ is a codifferential on $C(A')$, we have $\sum_{k=1}^n {\dlt'}^1_k \cc {\dlt'}^k_n=0$, and the coLeibniz rule implies that $\rdDelta{n-1} \cc \delta' = \sum_{i=1}^{n}( \id^{i-1} \tensor \delta' \tensor \id^{n-i}) \cc \rdDelta{n-1}$. A direct computation using these equalities, along with Eq.\ \ref{eq:coder}, gives $\sum_{k=1}^{n} \cQ^1_k \cc \cQ^k_n =0$. 

(2) Since $\cM = \cF_1 \cM$, Eq.\ \ref{eq:Q-filt} implies that 
$\cQ^1_n({F^1}^{\, \btensor n}) \in \cF_n \cM_{-1}$ for all \hbox{$n \geq 1$}. Hence, the infinite summation in the definition of $\cR(F)$ converges, since $\cM$ is complete, and so  $\cR \maps \cM_0 \to \cM_{-1} $ is a well-defined function. From combining Eq.\ \ref{eq:comorph} and Eq.\ \ref{eq:Q} along with the identity $F^m_n= (F^1)^{\tensor m} \cc \rdDelta{m-1} \vert_{C^n(A)}$, it follows that $F^1$ is in the zero locus of $\cR$ if and only if the corresponding coalgebra map $F$ satisfies Eq.\ \ref{eq:dgcomorph}.

(3) Let $F^1 \in \cM_{0}$. The left-hand side of Eq.\ \ref{eq:bianchi} is a sum of terms of the form
$s_{m,\el}:=  \sum_{k=0}^{m-1} {F^1}^{\btensor (m-1)-k} \btensor \cQ^1_\el( {F^1}^{\, \btensor \el} ) \btensor {F^1}^{\btensor k}$
for $m, \el \geq 1$. From  Eq.\ \ref{eq:coder}, we deduce that $s_{m,\el} = \cQ^1_{m} \cc \cQ^{m}_{m + \el -1} \bigl( {F^1}^{{\, \btensor m + \el -1}} \bigr)$. The desired equality \eqref{eq:bianchi} will then follow from Eq.\ \ref{eq:codiff}, or equivalently, the fact that $\cQ \cc \cQ =0$.
\end{proof}

The next proposition shows that $(\Tc(\cM),\cQ)$ encodes the obstruction theory for dg coalgebra morphisms between $(C(A),\dlt)$ and $(C(A'),\dlt')$. 
\begin{proposition}\label{prop:obs}
Let $m >1$ and suppose $\{F^1_1,\ldots F^1_{m-1}\}$ is a collection of degree~$0$ linear maps $F^1_k \maps C^k(A) \to \ua  A'$ such that the corresponding coalgebra
morphism $F \maps C(A) \to C(A')$ satisfies 
\[
(\dlt' \cc F - F \cc \dlt) \vert_{C^{\leq m-1}(A)} =0.
\]
Then the linear map $c_m(F) \maps C^m(A) \to \ua  A'$ defined as
\begin{equation} \label{eq:obs}
c_m(F):=  \sum_{k=2}^m \dlt^{\prime 1}_k \cc F^k_m - \sum_{k=1}^{m-1} F^1_k  \cc \dlt^k_m
\end{equation}
is a degree $-1$ cycle in the quotient  \[
\bigl(\Hom_R  (C^m(A), \ua  A' ), \ba{\pa} \bigr)
\cong (\cF_m \cM, \pa)/(\cF_{m+1} \cM, \pa).
\] 
% where
% \[
% \pa f = \mu^{\prime 1}_1 \circ f  - (-1)^{\deg{f}} f \circ \mu^m_m, \quad f \in \Hom(C^m(A),\ua  A').  
% \]
%\item 
Moreover, there exists a linear map $\ti{F}^1_m \maps C^m(A) \to \ua  A'$ such that the coalgebra morphism $\ti{F} \maps C(A) \to C(A')$  corresponding to the collection $\{F^1_1,\ldots F^1_{m-1},\ti{F}^1_m\}$ satisfies 
\[
(\dlt' \cc \ti{F} - \ti{F} \cc \dlt) \vert_{C^{\leq m}(A)} =0
\]
if and only if $c_m(F)=-\ba{\pa} \ti{F}^1_m$. 
%\end{enumerate} 
\end{proposition}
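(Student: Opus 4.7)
The plan is to reinterpret the obstruction $c_m(F)$ as a component of the ``curvature'' $\cR$ from Lemma~\ref{lemma:Ainf-hom}, and then exploit the Bianchi-type identity \eqref{eq:bianchi} together with the filtration estimate \eqref{eq:Q-filt} to obtain the cycle condition essentially for free.

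First I would view the collection $\{F^1_1,\dots,F^1_{m-1}\}$ as a degree zero element $F^1 \in \cM_0$ by declaring $F^1_k := 0$ for all $k \geq m$. Using the formula $F^n_m = (F^1)^{\tensor n} \cc \rdDelta{n-1}|_{C^m(A)}$ from \eqref{eq:comorph} and the definition \eqref{eq:Q} of the $\cQ^1_n$, a direct unraveling of $\cR(F) = \cQ^1_1(F^1) + \sum_{n \geq 2}\cQ^1_n({F^1}^{\, \btensor n})$ on the component $C^m(A)$ gives
\[
\cR(F)\bigl|_{C^m(A)} \;=\; \pa F^1_m \;+\; c_m(F) \;=\; c_m(F),
\]
the last equality using $F^1_m = 0$. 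Moreover, the hypothesis $(\dlt' \cc F - F \cc \dlt)|_{C^{\leq m-1}(A)} = 0$ translates, via part (2) of Lemma~\ref{lemma:Ainf-hom}, into $\cR(F) \in \cF_m \cM$.

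Next, to extract the cycle property, I would apply \eqref{eq:bianchi}. Because $\cR(F) \in \cF_m \cM$ and $F^1 \in \cF_1 \cM$, the filtration estimate \eqref{eq:Q-filt} forces every summand $\cQ^1_n\bigl({F^1}^{\, \btensor (n-1)-k} \btensor \cR(F) \btensor {F^1}^{\, \btensor k}\bigr)$ with $n \geq 2$ to lie in $\cF_{m+n-1}\cM \sse \cF_{m+1}\cM$. Hence \eqref{eq:bianchi} reduces, modulo $\cF_{m+1}\cM$, to $\cQ^1_1 \cR(F) \equiv 0$, i.e.\ $\pa \cR(F) \in \cF_{m+1}\cM$. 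Restricting to $C^m(A)$ and passing to the quotient $\cF_m \cM / \cF_{m+1}\cM \cong \Hom_R(C^m(A), \ua A')$ therefore gives $\bar\pa\, c_m(F) = 0$.

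For the ``moreover'' clause, I would redo the same computation after replacing the vanishing $F^1_m = 0$ by a candidate $\ti F^1_m$. The point is that for $n \geq 2$ each $\ti F^n_m$ is a sum of tensor products $\ti F^1_{i_1} \tensor \cdots \tensor \ti F^1_{i_n}$ with every $i_j \leq m-1$, so it equals $F^n_m$; likewise the second sum in \eqref{eq:obs} only involves $F^1_k$ for $k \leq m-1$. Thus $c_m(\ti F) = c_m(F)$, and the previous computation yields $\cR(\ti F)|_{C^m(A)} = \pa \ti F^1_m + c_m(F)$. The desired compatibility on $C^{\leq m}(A)$ is precisely the vanishing of this expression, which is the equation $c_m(F) = -\bar\pa \ti F^1_m$ once one identifies $\pa \ti F^1_m$ with its class in the quotient.

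The main technical obstacle I anticipate is the careful bookkeeping in the first step, matching the combinatorics of $\rdDelta{n-1}$ and the expansion \eqref{eq:comorph} of $F^n_m$ against the two sums in \eqref{eq:obs}, and accounting correctly for how the ``internal differential'' piece $F^1_m \cc \dlt^m_m$ gets absorbed into the formal term $\pa F^1_m$. Everything downstream is then a clean consequence of parts (1)--(3) of Lemma~\ref{lemma:Ainf-hom}.
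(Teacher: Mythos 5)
Your proposal is correct and follows essentially the same route as the paper's proof: identify $c_m(F)$ with the $C^m(A)$-component of the curvature $\cR(F)$ for the truncated element $F^1=(F^1_1,\dots,F^1_{m-1},0,\dots)$, use the hypothesis to place $\cR(F)$ in $\cF_m\cM$, and deduce $\pa\,\cR(F)\in\cF_{m+1}\cM$ from the identity \eqref{eq:bianchi} together with the filtration estimate \eqref{eq:Q-filt}. The treatment of the ``moreover'' clause via $\cR(\ti F)_m=\ba{\pa}\ti F^1_m+c_m(F)$ is likewise just a repackaging of the paper's direct appeal to Eq.~\eqref{eq:dgcomorph}.
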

\def\squeezeddots{{\mbox{$. \hskip -.8pt . \hskip -.8pt .$}}}

\begin{proof}
The definition of the differential $\pa = \cQ^1_1$ on $\cM$ implies that we may write the induced differential on the quotient as 
$\ba{\pa} \ti{F}^1_m = (\cQ^1_1 \ti{F}^1_m) \vert_{\cC^m(A)} = \dlt^{\prime 1}_1 \circ \ti{F}^1_m  - \ti{F}^1_m \circ \dlt^m_m$, 
for any degree 0 map  
$\ti{F}^1_m \in \Hom_R  (C^m(A), \ua  A')$.  Hence, the second statement of the proposition follows directly from Eq.\ \ref{eq:dgcomorph}. It then remains to show that $\ba{\pa} c_m(F)=0$, or equivalently, that $\cQ^1_1 c_m(F) \in \cF_{m+1}\cM$.

Let $F^1=(F^1_1,F^1_2,\cdots,F^1_{m-1},0,0 \cdots) \in \cM_{0}$. For
$\el \geq 1$, let $\cR(F)_{\el}:=\cR(F) \vert_{C^{\el}(A)}$ denote the
restriction of the map \eqref{eq:curv} to the submodule
$C^{\el}(A)$. Then $\cR(F)_{\el}= \sum_{k=1}^{\el} \bigl( {\dlt'}^1_k
\cc F^k_\el - F^1_k \cc \dlt^k_{\el} \bigr)$. Note that $F^1_k$ makes
no contribution to $\cR_\el(F^1)$ if $k > \el$. Hence, the hypothesis
for the collection $\{F^1_1,\squeezeddots , F^1_{m-1}\}$ implies that \hbox{$\cR(F)_{\el \leq m-1} =0$}, and so we have $\cR(F) \in \cF_{m}\cM$. Since the linear maps $\{\cQ^1_n\}$ are compatible with the filtration on $\cM$, it follows from Eq.\ \ref{eq:bianchi} that $\cQ^1_1 \cR(F) \in \cF_{m+1}\cM$. On the other hand, $\cR(F)_m  = c_m(F) \in \cF_{m}\cM$, since the $k$th components of $F^1$ vanish for $k \geq m$. Therefore, $\cR(F) - c_m(F) \in \cF_{m+1}\cM$, and so we conclude that $\cQ^1_1 c_m(F) \in \cF_{m+1}\cM$.   
\end{proof}
Using, for each $n \geq 2$, the $R$-module isomorphisms $\Hom_R(C^n(A),\ua  A')_{-1}  \cong \Hom_R(A^{\tensor n},A')_{n-2}$, we obtain as a corollary the obstruction theory for weak $A_\infty$-morphisms.
\begin{corollary} \label{cor:Ainf-obs}
The obstruction to lifting a chain map $f_1 \maps (A,d) \to (A',d')$ to a weak $A_\infty$-morphism
$\bff=(f_1,f_2,f_3,\ldots) \maps (A,d,\bfmu) \to (A',d',\bfmu')$ is an infinite sequence of homology classes $$\bigl\{[\kappa_n] \in H_{n-2} \bigl( \Hom_R(A^{\tensor n},A')\bigr) ~ \vert ~ n \geq 2 \bigr\}$$ where the differential on the hom complex $\Hom_R(A^{\tensor n},A')$ is the canonical one induced by $d$ and $d'$.
\end{corollary}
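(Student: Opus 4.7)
The plan is to bootstrap the obstruction theory developed in Prop.\ \ref{prop:obs} along the bar construction correspondence recalled in Sec.\ \ref{sec:bar}. Under that correspondence, extending $f_1$ to a weak $A_\infty$-morphism $\bff=(f_1,f_2,\ldots)$ amounts to extending the coalgebra morphism $F \maps C(A) \to C(A')$ whose only nontrivial component is $F^1_1 := \ua \cc f_1 \cc \da$, to one satisfying $\dlt' \cc F = F \cc \dlt$. Since the condition $\dlt' \cc F = F \cc \dlt$ is equivalent to its restrictions to $C^{\leq m}(A)$ for every $m \geq 1$, and since the base case $m=1$ is exactly the assumption that $f_1$ is a chain map, we may build the higher components $F^1_m$ inductively in $m$.

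Assume inductively that we have constructed $\{F^1_1,\ldots,F^1_{m-1}\}$ for which the partial compatibility relation holds on $C^{\leq m-1}(A)$. By Prop.\ \ref{prop:obs}, the map $c_m(F) \maps C^m(A) \to \ua A'$ of \eqref{eq:obs} is a degree $-1$ cycle in the quotient complex $\bigl(\Hom_R(C^m(A),\ua A'),\ba{\pa}\bigr)$, and extending the collection by a further map $\ti F^1_m$ satisfies the compatibility on $C^{\leq m}(A)$ if and only if $c_m(F) = -\ba{\pa}\ti F^1_m$; in other words, precisely when the cohomology class of $c_m(F)$ in this quotient vanishes. Thus the obstructions live naturally in the homology of the hom complexes on the coalgebra side.

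The final step is to transport this obstruction to the hom complex $\Hom_R(A^{\tensor n}, A')$ advertised in the statement. The suspension induces an isomorphism of chain complexes $\Hom_R(C^n(A),\ua A') \cong \Hom_R(A^{\tensor n}, A')$ with a degree shift of $+(n-1)$: a map $\Phi \maps (\ua A)^{\tensor n} \to \ua A'$ of degree $k$ corresponds to the map $\da \cc \Phi \cc (\ua)^{\tensor n} \maps A^{\tensor n} \to A'$ of degree $k + n - 1$, and since $\dlt^1_1$ and $\dlt^{'1}_1$ are the suspended differentials, this isomorphism is compatible with the induced differentials on both sides. Applied to the degree $-1$ cycle $c_m(F)$ with $m=n$, one obtains a degree $n-2$ cycle $\kappa_n \in \Hom_R(A^{\tensor n},A')$ whose class $[\kappa_n] \in H_{n-2}\bigl(\Hom_R(A^{\tensor n},A')\bigr)$ is the $n$th obstruction.

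The main obstacle I anticipate is just the bookkeeping of the degree shifts and signs involved in translating between the reduced tensor coalgebra picture and the ``unsuspended'' hom complex, so that the statement ``the differential on $\Hom_R(A^{\tensor n},A')$ is the canonical one induced by $d$ and $d'$'' is accurately matched with the differential $\ba{\pa}$ on $\Hom_R(C^n(A),\ua A')$. Everything else is an immediate specialization of Prop.\ \ref{prop:obs}, and the inductive construction terminates in the sense that a full weak $A_\infty$-extension exists if and only if every obstruction class $[\kappa_n]$ vanishes.
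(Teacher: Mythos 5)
Your proposal is correct and follows essentially the same route as the paper: the paper derives Cor.\ \ref{cor:Ainf-obs} in one line from Prop.\ \ref{prop:obs} via the (de)suspension isomorphism $\Hom_R(C^n(A),\ua A')_{-1} \cong \Hom_R(A^{\tensor n},A')_{n-2}$, which is exactly your final step, and your degree count $-1 \mapsto n-2$ is right. The only thing to keep in mind is the standard caveat (present in the paper's phrasing as well) that each class $[\kappa_n]$ depends on the choices of the lower components $f_2,\ldots,f_{n-1}$.
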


\subsection{Algebraic models for the interval} \label{sec:interval}
We will need a notion of homotopy between morphisms of dg coalgebras. 
\begin{definition} \label{def:interval}
A unital dg associative $R$-algebra $(\J,d, \cdotp)$ is a {\bf model for the interval} if there exists unital dg algebra morphisms $\eps_0,\eps_1 \maps \J \to R$, and $\jm \maps R \to \J$ such that
\begin{enumerate}[leftmargin=23pt]
\item The composition $R \xto{\jm} \J \xto{(\eps_0,\eps_1)} R \times R$ is the diagonal.
\item As chain maps, the morphisms $R \xto{\jm} \J \xto{\eps_i} R$
are deformation retractions for $i=0,1$.
\item Given two maps between chain complexes $f, g \maps (V,d_V) \to (W,d_W)$, and a~chain homotopy % $h \maps A \to \da B$
  between them, there exists a corresponding chain map $\ti{h} \maps V \to W \tensor_R \J$ such that $(\id \tensor \eps_0) \cc \ti{h} = f$ and $(\id \tensor \eps_1) \cc \ti{h} =g$.
\end{enumerate}  
\end{definition}
We recall two examples. The first is the normalized cochain algebra $N(I):=(N(\Del^1),d_N, \cup)$
on the 1-simplex with coefficients in $R$. As a graded $R$-module, $N(I)_{-1}:= R \vphi_{I}$, and  $N(I)_{0}:= R \vphi_0 \oplus R \vphi_1$. The differential is $d_N \vphi_0 := \vphi_I$, and  $d_N \vphi_1 := -\vphi_I$, and $\cup$ denotes the usual cup product. 
The following lemma is well known; the proof follows from a straightforward verification, so we omit it.
\begin{lemma} \label{lem:models}
The dg $R$ algebra $N(I)$ is an algebraic model for the interval over $R$ when equipped with the morphisms $\jm \maps R \to N(I)$, and  $\eps_0,\eps_1 \maps N(I) \to R$  defined as:
$\jm(1_R):= \vphi_0 + \vphi_1$, $\eps_0(\vphi_0):= \eps_1(\vphi_1):=1_R$, and $\eps_0(\vphi_1):= \eps_1(\vphi_0):=0$.
\end{lemma}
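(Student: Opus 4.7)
The plan is to verify the four pieces of data—namely the unital dg algebra structure on $N(I)$ together with the three morphisms $\jm$, $\eps_0$, $\eps_1$—satisfy the three conditions of Definition \ref{def:interval} by direct inspection. Since $N(I)$ is finite-dimensional with $N(I)_0 = R\vphi_0 \oplus R\vphi_1$ and $N(I)_{-1} = R\vphi_I$, the argument amounts to small bookkeeping in each degree.

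First, I would check that $\jm$, $\eps_0$, $\eps_1$ are unital dg algebra maps. Unitality is built into the definition since $\jm(1_R) = \vphi_0 + \vphi_1$ is the multiplicative unit of $N(I)$ (recall $\vphi_0, \vphi_1$ are the cochain duals to the two vertices, so $\vphi_0 \cup \vphi_0 = \vphi_0$, $\vphi_1 \cup \vphi_1 = \vphi_1$, and $\vphi_0 \cup \vphi_1 = \vphi_1 \cup \vphi_0 = 0$), and compatibility with the differential is automatic because $R$ has no elements in degree $-1$. Condition (1) of Definition \ref{def:interval} is then immediate: $\eps_i(\jm(1_R)) = \eps_i(\vphi_0) + \eps_i(\vphi_1) = 1_R$ for $i=0,1$.

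For condition (2), the equalities $\eps_i \cc \jm = \id_R$ are clear from the values of $\eps_i$ on $\vphi_0 + \vphi_1$. To exhibit $\jm \cc \eps_i$ as homotopic to $\id_{N(I)}$, I would write down an explicit contracting homotopy $H_i \maps N(I) \to N(I)$. For $i=0$, set $H_0(\vphi_I) := -\vphi_1$ and $H_0(\vphi_0) := H_0(\vphi_1) := 0$; then a direct degree-by-degree check shows $d_N H_0 + H_0 d_N = \id_{N(I)} - \jm \cc \eps_0$. The case $i=1$ is symmetric, with $H_1(\vphi_I) := \vphi_0$.

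Condition (3) is the most substantive, but still a matter of writing down the obvious formula and checking signs. Given $f, g \maps (V,d_V) \to (W,d_W)$ and a chain homotopy $h$ from $f$ to $g$, I would define
\[
\ti{h}(v) := f(v) \tensor \vphi_0 + g(v) \tensor \vphi_1 + (-1)^{|v|} h(v) \tensor \vphi_I.
\]
The identities $(\id \tensor \eps_0) \cc \ti{h} = f$ and $(\id \tensor \eps_1) \cc \ti{h} = g$ fall straight out of the values of $\eps_i$ on the basis. To confirm that $\ti{h}$ is a chain map, I would compute $d_{W \tensor \J}(\ti h(v))$ using the Koszul rule for the tensor product differential; the terms involving $d_W f$, $d_W g$ pair up with $f d_V$, $g d_V$ by the chain-map property, and the coefficient of $\vphi_I$ reduces, after sign-tracking with $d_N\vphi_0 = \vphi_I = -d_N\vphi_1$, to the homotopy identity $d_W h + h d_V = g - f$. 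The only delicate point in the entire proof is getting the sign in front of $h(v) \tensor \vphi_I$ correct under the paper's homological conventions, so that is where I would expect to spend the most care.
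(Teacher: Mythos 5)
Your proposal is correct: the paper itself omits the proof of this lemma, remarking only that it ``follows from a straightforward verification,'' and your degree-by-degree check is exactly that intended verification (the contracting homotopies $H_0(\vphi_I)=-\vphi_1$, $H_1(\vphi_I)=\vphi_0$ and the formula $\ti h(v)=f(v)\tensor\vphi_0+g(v)\tensor\vphi_1+(-1)^{\deg{v}}h(v)\tensor\vphi_I$ all check out against $d_N\vphi_0=\vphi_I=-d_N\vphi_1$ and the cup-product relations). The only point to pin down, which you already flag, is the orientation of the homotopy convention: your formula yields $d_Wh+hd_V=g-f$, which is consistent with Definition~\ref{def:interval}(3) since that condition does not privilege a direction.
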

The second example is a graded commutative model for the case when \hbox{$R=\kk$} is a field of characteristic zero. This will be used in Sec.\ \ref{sec:Pinf}.
We denote by $\Om(I):=(\kk[t,dt], d_{\dR}, \wedge)$ the polynomial de
Rham algebra on the 1-simplex. As a graded vector space,
$\Om(I)_{-1}:= \kk[t]dt$, and $\Om(I)_0= \kk[t]$. The differential is 
\[
d_{\dR}(f(t) + g(t)dt) = \frac{df}{dt} dt,
\] 
and $\wedge$ is the usual wedge product. The obvious analog of Lemma \ref{lem:models} holds for $\Om(I)$, in which $\eps_0,\eps_1 \maps \Om(I) \to \kk$ are the evaluation maps at $t=0$, and $t=1$, respectively.

\subsubsection{Tensoring $A_\infty$-algebras with dg algebras} \label{sec:tensor}
Recall that if $(A,d,\bfmu)$ is an $A_\infty$-algebra, and $(B,d_B,\cdotp)$ is a dg associative algebra, then the tensor product $(A \tensor_R B, d_{\tensor}, \bfmu_{\tensor})$ is an $A_\infty$-algebra
with $d_\tensor:= d \tensor \id + \id \tensor d_B$, and
$${\mu_{\tensor}}_{k}(x_1 \tensor b_1, x_2 \tensor b_2, \cdots, x_k \tensor b_k):=
(-1)^{\varepsilon}\mu_k(x_1,\ldots,x_k) \tensor b_1 \cdotp b_2
\cdotp \cdots \cdotp b_k$$ 
for $k \geq 2$,
where $(-1)^\varepsilon$ is the usual Koszul sign. 
Note that this construction is functorial: if $\phi \maps B \to B'$ is a morphism of dg algebras then  
$\id_A \tensor \phi$ is a strict morphism of $A_\infty$-algebras.

\subsection{Lifting chain maps to weak $A_\infty$-morphisms} \label{sec:lift}
A special case of the proposition below, valid when $R$ is a field of characteristic zero, was given in \cite[Prop.\ 35]{Markl:2004} using different methods.
In what follows, $\bfT=(\tha_1,\tha_2,\cdots) \maps  (A,d,\bfmu) \to (A',d',\bfmu')$ is a given weak $A_\infty$-morphism, and $\Tha \maps (C(A),\dlt)  \to (C(A'),\dlt')$ denotes the corresponding morphism of dg coalgebras.
\begin{proposition}\label{prop:lift}
Suppose $\psi \maps (A,d) \to (A',d')$ is a chain map that is chain homotopic to $\tha_1 \maps (A,d) \to (A',d')$. Then there exists a weak $A_\infty$-morphism  $\bfpsi=(\psi_1,\psi_2,\cdots) \maps (A,d,\bfmu) \to (A',d',\bfmu')$ such that $\psi_1=\psi$.

\end{proposition}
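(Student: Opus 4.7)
My plan is to reduce the construction of $\bfpsi$ to an obstruction-theoretic lifting problem with values in a path object for $A'$, using the interval model $N(I)$ from Section \ref{sec:interval} together with Corollary \ref{cor:Ainf-obs}.

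First, I will form the $A_\infty$-algebra $\hat{A}' := A' \otimes_R N(I)$ via the construction of Section \ref{sec:tensor}. By functoriality, the dg algebra morphisms $\eps_0, \eps_1 \maps N(I) \to R$ and $\jm \maps R \to N(I)$ from Lemma \ref{lem:models} induce strict $A_\infty$-morphisms
\[
\eps^{A'}_i := \id_{A'} \otimes \eps_i \maps \hat{A}' \to A', \qquad j^{A'} := \id_{A'} \otimes \jm \maps A' \to \hat{A}',
\]
with $\eps^{A'}_i \circ j^{A'} = \id_{A'}$, each of which is a chain homotopy equivalence.

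Next, I apply Definition \ref{def:interval}(3) to the given chain homotopy between $\psi$ and $\theta_1$ to obtain a chain map $\tilde\psi_1 \maps (A,d) \to (\hat{A}', d_\otimes)$ satisfying $\eps^{A'}_0 \circ \tilde\psi_1 = \theta_1$ and $\eps^{A'}_1 \circ \tilde\psi_1 = \psi$. I then extend $\tilde\psi_1$ inductively to a weak $A_\infty$-morphism $\tilde\bfpsi = (\tilde\psi_1, \tilde\psi_2, \ldots) \maps (A,d,\bfmu) \to (\hat{A}', d_\otimes, \bfmu'_\otimes)$. Once such $\tilde\bfpsi$ is in hand, the composition
\[
\bfpsi := \eps^{A'}_1 \circ \tilde\bfpsi
\]
is a weak $A_\infty$-morphism (strict composed with weak) whose linear part is $\eps^{A'}_1 \circ \tilde\psi_1 = \psi$, as required.

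The main obstacle is the inductive extension step, which I will address using Corollary \ref{cor:Ainf-obs}. The key idea is to perform the inductive construction so that $\eps^{A'}_0 \circ \tilde\psi_n = \theta_n$ at every stage. Concretely, decomposing along the $R$-basis $\{\phi_0 + \phi_1,\, \phi_1,\, \phi_I\}$ of $N(I)$, one writes
\[
\tilde\psi_n = \theta_n \otimes (\phi_0 + \phi_1) + \alpha_n \otimes \phi_1 + \beta_n \otimes \phi_I
\]
for unknowns $\alpha_n, \beta_n \maps A^{\otimes n} \to A'$. The constraint $\eps^{A'}_0 \tilde\psi_n = \theta_n$ is then automatic, so the $A_\infty$-relations for $\tilde\bfpsi$ reduce, after projecting onto the $\phi_1$- and $\phi_I$-components, to a system for $\alpha_n$ and $\beta_n$ over $A'$. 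Because the $\theta_k$-part of this system already satisfies the $A_\infty$-relations for $\bfT$, the resulting cocycle obstruction lies in the same cohomology class as the obstruction to merely lifting $\eps^{A'}_0 \tilde\psi_1 = \theta_1$ through $\bfT$, which vanishes tautologically. Thus the obstruction classes produced by Corollary \ref{cor:Ainf-obs} are zero at every stage, and the induction proceeds. The bookkeeping for the signs and for the interaction between the $\phi_1$ and $\phi_I$ components (via $d_N \phi_1 = -\phi_I$) is the principal calculational burden, but no new conceptual input is required.
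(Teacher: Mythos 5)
Your proposal follows essentially the same route as the paper's proof: lift to the path object $A'\otimes \J$ (the paper allows a general interval model $\J$, e.g.\ $N(I)$), run the obstruction theory of Prop.~\ref{prop:obs} while maintaining the invariant that evaluation at $0$ recovers $\bfT$, and use acyclicity of $\ker(\id\otimes\eps_0)$ — your basis decomposition $\theta_n\otimes(\vphi_0+\vphi_1)+\alpha_n\otimes\vphi_1+\beta_n\otimes\vphi_I$ is precisely the paper's choice $\wti{H}^1_m = J^1_1\circ\Tha^1_m + \wti{K}^1_m$ with $\wti{K}^1_m\in\ker E^1_{(0)\ast}$ — and then compose with evaluation at $1$. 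The only imprecision is the phrase ``lies in the same cohomology class as'' (the two obstructions live in different complexes related by the quasi-isomorphism $E^1_{(0)\ast}$), but the argument is the intended one.
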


\newcommand{\Lam}{\Lambda}
\newcommand{\pz}{{(0)}}
\newcommand{\po}{{(1)}}
\newcommand{\pii}{{(i)}}
% If $(A,d,\mu)$ and $(A',d',\mu')$ are $\Pinf$-algebras over the field $\kk$, then we set $R=\kk$ and choose $\J$ to be a commutative model. 

\begin{proof}
Let $(\J,d_\J,\cdotp)$ be an algebraic model of the interval with 
$\eps_\pz,\eps_\po \maps \J \to R$, $\jm \maps R \to \J$ as described in 
Def.\ \ref{def:interval}. 
Let $J \maps (C(A'),\dlt') \to (C(A' \tensor \J), \dlt'_\tensor)$ 
and $E_{(i)} \maps (C(A' \tensor \J), \dlt'_\tensor) \to (C(A'),\dlt')$ for $i=0,1$,
denote the dg coalgebra morphisms corresponding to the 
strict $A_\infty$-morphisms $\id_{A'} \tensor \jm$, and $\id_{A'} \tensor\eps_{(i)}$ respectively.
% \maps (\ua  A', {\dlt'}^1_1) \to (\ua (A' \tensor \J), {\dlt'_{\tensor}}^1_1)$,
%\maps  (\ua  (A' \tensor \J), \mu'_{\tensor 1} ) \to (\ua  A', \mu_1')$ 

To prove the proposition, we will use the obstruction theory developed in Prop.~\ref{prop:obs}
to inductively construct a dg coalgebra morphism $\mathsf{H} \maps (C(A),\dlt) \to (C(A' \tensor \J),\dlt'_\tensor)$ such that the linear component $\Psi^1_1$ of the composition $\Psi:=E_{(1)} \cc \mathsf{H} \maps (C(A),\dlt) \to (C(A'),\dlt')$ satisfies $\Psi^1_1 =\ua  \cc \psi \cc \da$.

For the base case, let $h \maps A \to \da A'$ be a chain homotopy satisfying $\psi -\tha_1 = d'h + h d$. Let $\ti{h} \maps (A,d) \to (A' \tensor \J,d'_\tensor)$ be the corresponding chain map as in Def.~\ref{def:interval}, and denote by $H \maps C(A) \to C(A'\tensor \J)$ the coalgebra morphism associated to the linear map $H^1_1:= \ua  \cc \ti{h} \cc \da$. Then by construction
\[
(\dlt'_{\tensor} \cc H - H \cc \dlt) \vert_{C^{1}(A)} =0, \quad (E_\po \cc H)^1_1 = \ua  \cc \psi \cc \da, \quad (E_\pz \cc H)^1_1 = \Tha^1_1.
\]

Now the inductive step. Let $m \geq  2$. Suppose $H \maps C(A) \to C(A' \tensor \J)$ is a~coalgebra morphism such that
\begin{equation} \label{eq:ind}
\begin{split}
&(\dlt'_\tensor \cc H - H \cc \dlt) \vert_{C^{\leq m-1}(A)} =0, \quad (E_\po \cc H)^1_1 = \ua  \cc \psi \cc \da, \\
&(E_\pz \cc H)^1_k = \Tha^1_k \qquad \text{for $k=1,\ldots, m-1$.}
\end{split}
\end{equation}

Consider the cycle $c_m(H) \in \bigl(\Hom_R\bigl(C^m(A), \ua (A \tensor \J)\bigr), \ba{\pa} \bigr)$ as defined in Eq.\ \ref{eq:obs}. We will show that it is a boundary. 
Composition with $E^1_{\pz 1}$ and $J^1_1$ gives chain maps
\[
\begin{split}
E^1_{\pz \ast} & \maps \Hom_R\bigl(C^m(A), \ua (A' \tensor \J)\bigr) \to 
\Hom_R\bigl(C^m(A), \ua  A'\bigr),\\
J^1_\ast &\maps \Hom_R\bigl(C^m(A), \ua  A'\bigr) \to  \Hom_R\bigl(C^m(A), \ua (A' \tensor \J)\bigr),
\end{split}
\]
respectively. Since $E_{\pz}$ corresponds to a strict $A_\infty$-morphism,  ${E_{(0)}}^{1}_{k}=0$ for $k \geq 2$.
%Note $J^1_k={E_{(0)}}^{1}_{k}={E_{(1)}}^{1}_{k}=0$ for $k \geq 2$. 
Therefore, it follows from the induction hypothesis \eqref{eq:ind} and the definition of $c_m(H)$ that $E^1_{\pz \ast} (c_m(H)) = c_m(\Tha)$. Since $\Tha$ is a dg coalgebra morphism, the cycle $c_m(\Tha)$ is a boundary. In particular, $c_m(\Tha)= - \ba{\pa} \Tha^1_m$.
Since $\J$ is a model for the interval, and tensor product preserves chain homotopy equivalence, there exists a chain homotopy $\lambda \maps \ua  (A' \tensor \J) \to (A' \tensor \J)$ such that the chain maps $J^1_1 $ and $E^1_{\pz 1}$ 
satisfy $J^1_1 \cc E^1_{\pz 1} - \id_{\ua  (A' \tensor \J)} = {\dlt'}^1_{\tensor 1} \cc \lambda + \lambda \cc {\dlt'}^{1}_{\tensor 1}$, in addition to $E^1_{\pz 1} \cc J^1_1 = \id_{\ua  A'}$.
Moreover, $\lambda$ induces a chain homotopy equivalence $\Hom_R\bigl(C^m(A), \ua (A' \tensor \J)\bigr) \simeq \Hom_R\bigl(C^m(A), \ua  A'\bigr)$. Indeed, $E^1_{\pz \ast} \cc J_\ast = \id_{\Hom}$ and
\[
J^1_{\ast} \cc E^1_{\pz \ast} - \id_{\Hom} = \ba{\pa} \cc \lambda_\ast + \lambda_\ast \cc \ba{\pa}.
\]
The above equation above implies that $-\ba{\pa} J_\ast(\Tha^1_m) - c_m(H) = \ba{\pa} K^1_m$,
where $K^1_m:= \lambda \cc c_m(H)$. By construction, $\ba{\pa} K^1_m$ is a cycle in the dg submodule $(\ker E^1_{\pz \ast}, \ba{\pa})$. Since $E^1_{\pz \ast}$ is a deformation retraction, $\ker E^1_{\pz \ast}$ is acyclic, and so there exists $\wti{K}^1_m \in \ker E^1_{\pz \ast}$ such that $\ba{\pa} \wti{K}^1_m = \ba{\pa} K^1_m$.

Finally, let $\wti{H}^1_m:= J^1_1 \cc \Tha^1_m + \wti{K}^1_m$, and denote by $\wti{H} \maps C(A) \to C(A' \tensor \J)$ the coalgebra morphism corresponding to the collection of linear maps $\{H^1_1,\cdots, H^1_{m-1},\wti{H}^1_{m} \}$. Then, by construction, $(E_\po \cc \wti{H})^1_1 = (E_\po \cc H)^1_1=\ua  \cc \psi \cc \da$. Furthermore, $(E_\pz \cc \wti{H})^1_k = \Tha^1_k$  for $k=1,\ldots, m$, and $c_m({H}) = - \ba{\pa} \wti{H}^1_m$. By Prop.\ \ref{prop:obs}, the latter equation implies that 
$(\dlt'_\tensor \cc \wti{H} - \wti{H} \cc \dlt) \vert_{C^{\leq m}(A)} =0$. This completes the induction step, and hence the proof.
\end{proof}
\section{Classifying transfers of $\cP_\infty$-algebras for a  
quadratic Koszul operad $\cP$} \label{sec:Pinf} 
We describe how to generalize Thm.\ \ref{thm:isotopy}, the
classification of transfers up to isotopy, to $\Pinf$-algebras, where
$\cP$ is a symmetric operad in graded vector spaces over a field~$\kk$
with $\chark \kk=0$. Furthermore, we assume $\cP$ is a quadratic
Koszul operad \hbox{\cite[Sec.~7.2.3]{LV}}. Examples of such
$\cP_\infty$-algebras include $L_\infty$-algebras and
$C_\infty$-algebras (i.e., homotopy Lie and homotopy commutative
algebras, respectively).

Let $(A,d,\bfmu_{\cP})$ be a $\cP_\infty$-algebra and $f_1 \maps (A,d) \to (A',d')$ a quasi-isomorphism of chain complexes. Let $h$, $g_1$, and $l$ be homotopy data as in \eqref{eq:hmtpy-data}. 
A version of the transfer theorem via the ``homotopy setup'' from Sec.\ 
\ref{sec:hmtpy-setup} exists in this context provided that $g_1f_1 =\id_{A}$, and
that $h$ satisfies the {\it side conditions}: $h^2=0, \ \hbox{$f_1h = 0$},\ hg_1 = 0$  \cite[Theorem~5]{DSV:2016}. Assuming that this is the case, we obtain formulas for a transferred structure $(A',d',\bfnu_{\cP})$, and weak $\cP_\infty$-morphisms 
\[
\bff \maps (A,d,\bfmu_\cP) \rightleftarrows (A',d',\bfnu_{\cP}) \maps \bfg,
\] 
as in \eqref{eq:f} and \eqref{eq:g}. In particular, $\bff$ and $\bfg$ are lifts of $f_1$ and $g_1$, respectively, and $f_1 \cc g_1 \simeq \id_{A}$. 
We now suppose that $(A',d',\bfmu'_{\cP})$ is another $\cP_\infty$-algebra on $(A',d')$. Our goal is to determine, as in Thm.\ \ref{thm:isotopy}, whether or not it is isotopic to the transferred structure $(A',d',\bfnu_{\cP})$. 

To address this, we proceed exactly as in the proof of Thm.\ \ref{thm:isotopy}. All that we need is a suitable version of the lifting result from Prop.\ \ref{prop:lift}, and the associated 
obstruction theory behind it, which we now provide. First, in analogy with Sec.~\ref{sec:bar}, weak $\Pinf$-morphisms $(A,d,\bfmu_{\cP}) \to (A',d',\bfmu'_{\cP})$ are equivalent to dg  $\coP$-coalgebra morphisms $(\cC(A),\dlt) \to (\cC(A'),\dlt')$ \cite[Sec.\ 10.2.2]{LV}. Here $\cC(V)$ denotes the ``cofree'' $\coP$-coalgebra generated by the graded vector space $\ua  V$, where $\coP$ is the Koszul dual cooperad of $\cP$ \cite[Sec.\ 10.1.8]{LV}. Conveniently, an exact replica of Prop.\ \ref{prop:obs} for \hbox{$\cP_\infty$-algebras} is given in \cite[Thm.\ A.1]{val}. 
Next, we recall \cite[Sec.\ 3.1]{val} that the tensor product of any $\cP_\infty$-algebra with a dg commutative algebra, is also a $\cP_\infty$-algebra (cf.\ Sec.\ \ref{sec:tensor}). In particular, if $\Om(I)$ is the commutative model of the interval from Sec.\ \ref{sec:interval}, then $\bigl(A'\tensor \Om(I), d'_{\tensor},\bfmu'_{\cP \, \tensor} \bigr)$ is a $\cP_\infty$-algebra in the obvious way. 

Finally, we observe that by setting the dg algebra $\J$ to $\Om(I)$ in
the proof of Prop.\ \ref{prop:lift}, we obtain a proof of the
analogous statement for lifting chain maps to
$\cP_\infty$-morphisms. All of the required pieces are now in place to
extend the proof of Thm.\ \ref{thm:isotopy} to the $\cP_\infty$-case:

\begin{theorem} 
\label{thm:iso-Pinf} 
Given  a surjective chain homotopy
equivalence $f_1 \maps (A,d) \to (A',d')$ with homotopy data
satisfying the aforementioned ``side conditions,'' there
exists a~weak $\cP_\infty$-morphism
$\bfF \maps (A,d,\bfmu_{\cP}) \to (A',d',\bfmu'_{\cP})$ extending
$f_1$ if and only if 
the $\cP_\infty$-algebra $(A',d',\bfmu'_{\cP})$ is isotopic to a
transfer of $(A,d,\bfmu_{\cP})$ over the chain homotopy equivalence $f_1$.
\end{theorem}

\end{document}